\newtheorem{theorem}{Theorem}[section]
\newtheorem{lemma}[theorem]{Lemma}
\newtheorem{proposition}[theorem]{Proposition}
\theoremstyle{definition}
\theoremstyle{remark}
\newtheorem{remark}[theorem]{Remark}
\numberwithin{equation}{section}
\newcommand{\R}{\mathbb{R}}
\newcommand{\C}{\mathbb{C}}
\newcommand{\CC}{\mathcal{C}}
\newcommand{\SO}{\mathrm{SO}}
\newcommand{\wtSO}{\widetilde{\mathrm{SO}}}
\newcommand{\OO}{\mathcal{O}}
\newcommand{\HH}{\mathcal{H}}
\newcommand{\GG}{\mathcal{G}}
\newcommand{\VV}{\mathcal{V}}
\newcommand{\LL}{\mathcal{L}}
\newcommand{\g}{\mathfrak{g}}
\newcommand{\h}{\mathfrak{h}}
\newcommand{\z}{\mathfrak{z}}
\newcommand{\so}{\mathfrak{so}}
\newcommand{\spi}{\mathfrak{sp}}
\newcommand{\gex}{\mathfrak{g}_{2(2)}}
\newcommand{\Gex}{G_{2(2)}}
\newcommand{\Iso}{\operatorname{Iso}}
\newcommand{\Kill}{\operatorname{Kill}}
\newcommand{\rad}{\operatorname{rad}}
\newcommand{\End}{\operatorname{End}}
\begin{document}

\title{Pseudo-Riemannian $\Gex$-manifolds with dimension at most $21$}

\author{Raul Quiroga-Barranco}
\address{Centro de Investigaci\'on en Matem\'aticas, Apartado Postal
  402, Guanajuato, Guanajuato, 36250, Mexico}
\email{quiroga@cimat.mx}

\thanks{This research was supported by a Conacyt grant and by SNI}

\subjclass[2010]{53C50, 20G41, 57S20, 53C24}
\keywords{Pseudo-Riemannian manifolds, exceptional Lie groups, rigidity results}

\maketitle

\begin{abstract}
    Let $\Gex$ be the non-compact connected simple Lie group of type $G_2$ over $\R$, and let $M$ be a connected analytic complete pseudo-Riemannian manifold that admits an isometric $\Gex$-action with a dense orbit. For the case $\dim(M) \leq 21$, we provide a full description of the manifold $M$, its geometry and its $\Gex$-action. The latter are always given in terms of a Lie group geometry related to $\Gex$, and in one case $M$ is essentially the quotient of $\SO_0(3,4)$ by a lattice.
\end{abstract}

\section{Introduction}
\label{sec:intro}
A fundamental problem in both geometry and dynamics is to understand the actions of a connected simple Lie group $G$ on manifolds. This is particularly interesting when one of such $G$-actions preserves a geometric structure on a manifold $M$. A basic example to consider is the left $G$-action on the manifold $H/\Gamma$ where $H$ is a semisimple Lie group, $\Gamma$ is a lattice of $H$ and the action is given by a non-trivial homomorphism $G \rightarrow H$. There are two well known properties for such an example. In the first place, the $G$-action on $H/\Gamma$ is ergodic and so has a dense orbit. And secondly, the Killing form of the Lie algebra of $H$ induces a bi-invariant pseudo-Riemannian metric on $H$ that descends to a metric on $H/\Gamma$ preserved by the $G$-action.

It has been conjectured that every finite volume preserving ergodic $G$-action on a manifold is essentially one of the examples $H/\Gamma$ just described (see \cite{ZimmerProgram}). In this direction, Zimmer's program proposes to study ergodic $G$-actions to understand their rigid properties. Many efforts on this line of research have shown very useful to consider actions that preserve a geometric structure. In particular, this has lead to the development of a set of tools widely known as Gromov-Zimmer's machinery (see for example \cite{Gromov,ZimmerAut,QuirogaCh}).

In this work we consider the case $G = \Gex$, the connected non-compact exceptional Lie group of type $G_2$ over $\R$, and an isometric $\Gex$-action on a finite volume pseudo-Riemannian manifold $M$. Following Zimmer's program, the final goal is to prove that $M$ is closely related to the group $\Gex$ itself, from the viewpoint of all the structures involved. We achieve this objective for the case $\dim(M) \leq 21$. We note that $21$ is the dimension of the Lie group $\SO(3,4)$ and that there is a homomorphism $\Gex \rightarrow \SO(3,4)$ that realizes the irreducible non-trivial representation of $\Gex$ with lowest dimension (see Section~\ref{sec:intro} for details). The following result proves that $M$ is always related to $\Gex$ and in one case that it is essentially given by a non-trivial homomorphism $\Gex \rightarrow \SO(3,4)$. In a sense, we thus provide a geometric/dynamic characterization of the homomorphism $\Gex \rightarrow \SO(3,4)$ that defines the irreducible $7$-dimensional representation of $\Gex$.

\begin{theorem}[Manifold and action type]
    \label{thm:diff_type}
    Let $(M,h)$ be a connected analytic complete pseudo-Riemannian manifold with finite volume that admits an isometric $\Gex$-action with a dense orbit. If $\dim M \leq 21$, then there exists a finite covering map $\pi : \widehat{M} \rightarrow M$ so that $\widehat{M}$ satisfies one of the following properties.
    \begin{enumerate}
        \item There exist a connected pseudo-Riemannian manifold $N$ and a discrete subgroup $\Gamma \subset \Gex \times \Iso(N)$ such that
            \[
                \widehat{M} = \left(\Gex \times N\right)/\Gamma.
            \]
            Furthermore, the $\Gex$-action on $M$ lifted to $\widehat{M}$ is precisely the left $\Gex$-action induced from the action on the first factor of $\Gex \times N$.
        \item There exist a lattice $\Gamma \subset \wtSO_0(3,4)$ so that
            \[
                \widehat{M} = \wtSO_0(3,4)/\Gamma.
            \]
            Furthermore, the $\Gex$-action on $M$ lifted to $\widehat{M}$ is precisely the left $\Gex$-action induced from a non-trivial homomorphism $\Gex \rightarrow \wtSO_0(3,4)$ and the left translation action on $\wtSO_0(3,4)$.
    \end{enumerate}
\end{theorem}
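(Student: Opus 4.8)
The plan is to follow the standard Gromov–Zimmer strategy of promoting a measurable structure to an analytic one, and then applying the structure theory of the orbit to recover the global model. First I would invoke Zimmer's cocycle superrigidity theorem for the isometric action of $\Gex$ on the finite-volume manifold $(M,h)$: since $\Gex$ is simple of higher rank (rank $2$) with finite center, the derivative cocycle into the orthogonal group of the metric is measurably cohomologous to a cocycle taking values in a compact extension of a homomorphic image of $\Gex$. Because $\dim M \le 21$ and the smallest non-trivial real representation of $\Gex$ is the $7$-dimensional one into $\so(3,4)$, the possible images are severely constrained: either the image is trivial (which, combined with the dense orbit, would force a contradiction with properness considerations unless handled separately) or it is all of $\Gex$ acting through copies of the $7$-dimensional representation, possibly with a trivial summand, inside the pseudo-orthogonal Lie algebra. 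The key representation-theoretic step is to enumerate which pseudo-orthogonal Lie algebras $\so(p,q)$ with $p+q \le 21$ admit $\gex$ as a subalgebra and to pin down the signature and multiplicity of the $7$-dimensional representation in each case.

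Next I would use the entropy/Gromov rigidity machinery (the Gromov representation associated to the rigid geometric structure) together with the analyticity and completeness hypotheses to upgrade the measurable cohomology to an honest $\Gex$-equivariant geometric statement on an open dense set, and then, by analyticity and completeness, on all of $\widehat M$ after passing to a finite cover to kill the relevant fundamental-group obstruction. Concretely, one builds a $\Gex$-invariant subbundle of the tangent bundle along which $\Gex$ acts locally freely, producing a local splitting $T\widehat M \cong (\text{tangent to }\Gex\text{-orbits}) \oplus (\text{a flat, }\Gex\text{-invariant complement})$; this is where the isometry group $\Iso(N)$ of the "transverse" manifold $N$ enters. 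Integrating these distributions — legitimate because everything is analytic and complete — yields a locally homogeneous structure whose developing map identifies $\widehat M$ with a quotient $(\Gex \times N)/\Gamma$ as in case (1).

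The dichotomy between the two cases should come from whether the $\Gex$-orbit is all of $\widehat M$ (equivalently, whether the transverse manifold $N$ is a point, or more precisely whether the image of $\Gamma$ in $\Iso(N)$ is such that the whole thing collapses). When $\dim M = 21$ and the action is orbit-equivalent to a transitive one, the $7$-dimensional representation forces $\widehat M$ to be a homogeneous space of $\wtSO_0(3,4)$ — one checks that the stabilizer must be discrete by a dimension count ($\dim \SO(3,4) = 21$), the metric must be (a multiple of) the one induced from the Killing form since it is $\Gex$-invariant and $\Gex$ is "large" inside $\SO(3,4)$, finiteness of volume forces $\Gamma$ to be a lattice, and the embedding $\Gex \hookrightarrow \wtSO_0(3,4)$ is the one integrating the $7$-dimensional representation. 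This gives case (2).

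The main obstacle I anticipate is the passage from the \emph{measurable} conclusion of cocycle superrigidity to the \emph{analytic/global} conclusion: controlling the behaviour of the invariant distributions at the boundary of the dense orbit, ensuring the transverse structure genuinely assembles into a manifold $N$ with an isometric action rather than merely a measurable lamination, and identifying exactly which finite cover is needed. This is precisely where completeness and analyticity of $(M,h)$ — rather than mere smoothness — are essential, and where the bulk of the technical work in the full proof will lie; the representation-theoretic bookkeeping for $p+q \le 21$, while finite, also requires care to rule out spurious cases (e.g. reducible sums $\mathbf{7} \oplus \mathbf{7}$ versus $\mathbf{7} \oplus \mathbf{1}^{k}$, and the various signatures they can carry).
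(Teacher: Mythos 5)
Your proposal correctly identifies the representation-theoretic backbone (the $7$-dimensional irreducible representation $\gex\rightarrow\so(3,4)$, the fact that every other non-trivial irreducible has dimension at least $14$, and the role of the constraint $\dim M\le 21$), but the mechanism you propose for separating the two cases is not the one that works, and the step that actually produces $\wtSO_0(3,4)$ is missing. The dichotomy is \emph{not} whether the $\Gex$-orbit is all of $\widehat M$: in case (2) the $\Gex$-orbits are $14$-dimensional inside the $21$-dimensional manifold $\wtSO_0(3,4)/\Gamma$, so the action is never transitive there, and your ``dimension count'' identifying the stabilizer as discrete conflates the $\Gex$-orbits with the $\wtSO_0(3,4)$-orbits. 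The correct dichotomy is whether the normal bundle $T\OO^\perp$ to the orbit foliation is integrable, which is detected by the $\gex$-valued form $\Omega(X,Y)=-\omega([X,Y])$; the integrable case yields conclusion (1) by the results of the integrable-normal-bundle paper, and the non-integrable case yields conclusion (2).

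The missing idea is Gromov's centralizer theorem rather than cocycle superrigidity. One works with the Lie algebra $\HH$ of Killing fields of $\widetilde M$ centralizing the lifted $\Gex$-action; Gromov's open-dense-orbit theorem makes the evaluation map $\HH\rightarrow T_x\widetilde M$ surjective at generic $x$, and the equivariance properties of $\Omega_x$ together with the decompositions $\wedge^2\R^{3,4}\simeq\so(3,4)\simeq\gex\oplus\R^{3,4}$ and the identity $[V,V]=\so(3,4)$ force, when $\Omega_x\ne 0$, that $\HH\simeq\so(3,4)$ as a Lie algebra; the alternative $\HH\simeq\gex\ltimes\R^{3,4}$ must be excluded by showing it would integrate $T\OO^\perp$, contradicting $\Omega\ne 0$. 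Geodesic completeness then integrates $\HH$ to an isometric right $\wtSO_0(3,4)$-action on $\widetilde M$ whose orbit map at a generic point becomes an isometry after rescaling the metric separately on $T\OO$ and $T\OO^\perp$, and a fundamental-group analysis (using that the centralizer of $\Gex$ in $\wtSO_0(3,4)$ is finite) produces the finite cover $\wtSO_0(3,4)/\Gamma$ with $\Gamma$ a lattice. None of this is reachable from the measurable conclusion of cocycle superrigidity alone; the measurable-to-analytic difficulty you flag is resolved in the actual argument precisely by working from the start with globally defined Killing fields on $\widetilde M$ rather than with the derivative cocycle.
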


The next result proves that the pseudo-Riemannian metric on $M$ can also be related to natural metrics.

\begin{theorem}[Metric type]
    \label{thm:metric_type}
    With the hypotheses and notation of Theorem~\ref{thm:diff_type}, one of the following holds according to the cases of such theorem.
    \begin{enumerate}
        \item For $\widehat{M} = (\Gex \times N)/\Gamma$, the covering map $\pi : (\widehat{M},\widehat{h}) \rightarrow (M,h)$ is locally isometric for the metric $\widehat{h}$ on $\widehat{M}$ induced from the product metric on $\Gex\times N$ where $\Gex$ carries a bi-invariant metric.
        \item For $\widehat{M} = \wtSO_0(3,4)/\Gamma$, there is new $\Gex$-invariant metric $\overline{h}$ on $M$ so that the covering map $\pi : (\widehat{M},\widehat{h}) \rightarrow (M,\overline{h})$ is locally isometric for the metric $\widehat{h}$ on $\widehat{M}$ induced from the bi-invariant metric on $\wtSO_0(3,4)$ given by the Killing form of $\so(3,4)$.
    \end{enumerate}
\end{theorem}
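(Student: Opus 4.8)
The plan is to upgrade the diffeomorphic and dynamical picture of Theorem~\ref{thm:diff_type} to a metric statement, combining the rigidity of invariant symmetric bilinear forms on the simple Lie algebras $\gex$ and $\so(3,4)$ with the control on the isometry group, and on the deck transformations of $\pi$, that is already built into the proof of Theorem~\ref{thm:diff_type}.

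For case (1) I would pull $h$ back to the universal cover $\Gex\times\widetilde N$, obtaining a metric $\widetilde h$ invariant under the left $\Gex$-action on the first factor and under the deck group, which commutes with that action and therefore lies in $\Gex^{\mathrm R}\times\Iso(\widetilde N)$, where $\Gex^{\mathrm R}$ denotes a copy of $\Gex$ acting by right translations on the first factor. The key input is that the Killing algebra of $(\Gex\times\widetilde N,\widetilde h)$ contains a \emph{full} commuting copy of $\gex$ acting by right translations on the first factor; this is exactly the centralizer computation underlying Theorem~\ref{thm:diff_type}, where the dense orbit, finiteness of the volume and analyticity force the germ of isometries along the $\Gex$-factor to be that of a bi-invariant model. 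Granting this, fix a point $(e,x)$; its stabilizer in $\Gex^{\mathrm L}\times\Gex^{\mathrm R}$ is the diagonal, which acts on $\gex\oplus T_x\widetilde N$ by $\mathrm{Ad}(\Gex)\oplus\mathrm{Id}$. Hence the restriction of $\widetilde{h}_{(e,x)}$ to $\gex$ is an $\mathrm{Ad}(\Gex)$-invariant form on the simple Lie algebra $\gex$, thus a multiple of its Killing form; the mixed block is an $\mathrm{Ad}(\Gex)$-invariant element of $\Hom(\gex,T_x^*\widetilde N)$, which vanishes because the adjoint representation of $\gex$ has no nonzero trivial subrepresentation; and the last block is a metric on the $\widetilde N$-directions. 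The Killing-form factor is constant (a further consequence of the homogeneous structure of the cover), so $\widetilde h$ is the product of a bi-invariant metric $b$ on $\Gex$ with a metric descending to a metric $h_N$ on $N$; pushing down gives assertion (1).

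For case (2) I would \emph{not} try to show that $h$ itself is bi-invariant, since the metrics on $H:=\wtSO_0(3,4)$ that are invariant under left translations by $\phi(\Gex)$ and right translations by $\Gamma$ need not be multiples of the Killing form. Instead I would put on $\widehat M=H/\Gamma$ directly the metric $\widehat h$ induced by the bi-invariant metric of $H$ attached to the Killing form of $\so(3,4)$: being bi-invariant it is right $\Gamma$-invariant, hence descends to $\widehat M$, and it is preserved by the lifted $\Gex$-action, which is by left translations through the homomorphism $\phi$ of Theorem~\ref{thm:diff_type}. It then remains to show that $\widehat h$ descends through the finite covering $\pi$, i.e.\ that the finite group $F$ of deck transformations of $\pi$ acts on $\widehat M$ by $\widehat h$-isometries. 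Each element of $F$ centralizes the lifted $\Gex$-action, being one member of a continuous $\Gex$-family of deck transformations passing through the identity; invoking the structure analysis carried out for Theorem~\ref{thm:diff_type} together with the Zariski density of the lattice $\Gamma$ in $H$ (Borel density theorem), one sees that such maps are induced by affine transformations of $H$, i.e.\ by elements of $H\rtimes\operatorname{Aut}(H)$ normalizing $\Gamma$, and all of these preserve the bi-invariant Killing metric. Therefore $\overline h:=\pi_*\widehat h$ is a well-defined $\Gex$-invariant metric on $M$ for which $\pi$ is a local isometry, which is assertion (2).

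The main obstacle is not the linear algebra of invariant forms but the two structural inputs: that the Killing algebra of the cover contains the full commuting copy of $\gex$ in case (1), and that the finite deck group of $\pi$ acts by affine transformations of $H$ in case (2). Both emerge from the analysis already performed for Theorem~\ref{thm:diff_type} (Gromov's theorem on rigid analytic geometric structures, Zimmer's entropy and centralizer techniques, and the Borel density theorem); granting them, the metric identifications above are essentially formal.
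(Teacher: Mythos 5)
Your case (1) follows essentially the route the paper takes (the paper simply quotes the integrable-case results of \cite{QuirogaCh}, whose content is the product/bi-invariant decomposition you sketch), and the linear algebra there is sound. The gap is in case (2). You put the Killing metric $\widehat h$ on $\wtSO_0(3,4)/\Gamma$ and then need the deck group of $\pi$ --- equivalently, the action of all of $\pi_1(M)$ on $\widetilde M\cong\wtSO_0(3,4)$ --- to preserve $h_K$, which you justify by saying that these maps commute with the lifted $\Gex$-action and hence, via Borel density, are affine. That step fails: the left $\rho_1(\Gex)$-action on $\wtSO_0(3,4)$ is free with $14$-dimensional orbits inside a $21$-dimensional group, so the diffeomorphisms commuting with it form an infinite-dimensional family (they are essentially parametrized by arbitrary data over the $7$-dimensional orbit space), and almost none of them are affine or $h_K$-isometric. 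Borel density gives nothing here because you have not yet placed $\pi_1(M)$ inside $\Iso(\wtSO_0(3,4),h_K)$ --- that containment is precisely what you are trying to establish, so the argument is circular. More fundamentally, your plan treats Theorem~\ref{thm:diff_type} as a finished differential-topological statement to be decorated with a metric afterwards, whereas in the paper the isometric identification is what \emph{produces} case (2) of Theorem~\ref{thm:diff_type} in the first place.

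The paper closes this gap by constructing $\overline h$ on $M$ \emph{before} identifying $\widetilde M$ with the group. At the base point, $h|_{T_{x_0}\OO}$ and $h|_{T_{x_0}\OO^\perp}$ are invariant scalar products for the $\GG(x_0)$-module structures on $\gex$ and $\R^{3,4}$, so by Proposition~\ref{prop:R34_scalar_product} (and simplicity of $\gex$) each is a constant multiple, with constants $c_1,c_2$, of the corresponding block of the Killing form $K$ pulled back under the orbit map. Setting $\overline h=c_1\,h|_{T\OO}\oplus c_2\,h|_{T\OO^\perp}$ yields a metric built entirely from $\Gex$- and $\pi_1(M)$-invariant data (namely $h$ and the orbit splitting $TM=T\OO\oplus T\OO^\perp$), so it is automatically a $\Gex$-invariant metric on $M$; completeness then upgrades the orbit map to a global isometry $\varphi:(\wtSO_0(3,4),h_K)\to(\widetilde M,\overline h)$. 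Only after this does one know that $\pi_1(M)$ sits (up to finite index) inside $L(\wtSO_0(3,4))R(\wtSO_0(3,4))$, which is also what identifies $\Gamma$ with a lattice acting by right translations. If you wish to keep your order of argument, you must first supply this isometric identification of $(\widetilde M,\overline h)$ with $(\wtSO_0(3,4),h_K)$; as written, the descent of $\widehat h$ through $\pi$ is unproved, and no relation between $\overline h$ and the original $h$ is obtained.
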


It is well known that the group $\wtSO_0(3,4)$ is weakly irreducible for the bi-invariant metric defined by the Killing form of its Lie algebra. We recall that a pseudo-Riemannian manifold is weakly irreducible if its not locally isomorphic to a product of pseudo-Riemannian manifolds. This property can be used to distinguish between the two cases of the theorems above.

As for the organization of the work, in Section~\ref{sec:G22} we present some basic facts on $\Gex$, its Lie algebra $\gex$ and their representations. Section~\ref{sec:centralizer} applies the Gromov-Zimmer's machinery to describe the centralizer $\HH$ of the $\Gex$-action on the universal covering space $\widetilde{M}$ in the Lie algebra of Killing fields. Finally, Section~\ref{sec:main_proofs} provide the proofs of the results stated in this Introduction.

\section{Preliminaries on $\Gex$}
\label{sec:G22}
We introduce the exceptional Lie group $\Gex$ and recall some properties that we will use in this work. This includes some properties of $\gex$, the Lie algebra of $\Gex$. We refer to \cite{Jacobson,SpringerVeldkamp} for further details.

We define $\Gex$ as the connected group of automorphisms of the split Cayley algebra $\CC$ over $\R$. We recall that $\CC$ is a composition algebra whose norm is a split quadratic form. In other words, the norm of $\CC$ is a quadratic form whose associated bilinear form has signature $(4,4)$. The group $\Gex$ preserves the unit $e$ of $\CC$ and so it preserves the orthogonal complement $e^\perp$ which is precisely the space of pure imaginary elements of $\CC$: the set of $a \in \CC$ such that $\overline{a} = -a$. The bilinear form of $\CC$ restricted to $e^\perp$ has signature $(3,4)$ and so we will denote $e^\perp = \R^{3,4}$. This yields a faithful representation $\Gex \rightarrow \SO(3,4)$, that we will call the linear realization of $\Gex$. Correspondingly, there is a Lie algebra representation $\gex \rightarrow \so(3,4)$, that exhibits $\gex$ as the Lie algebra of derivations of $\CC$ restricted to $e^\perp$. We will call this representation the linear realization of $\gex$.

\begin{proposition}
    \label{prop:g22_and_R34}
    The Lie algebra $\gex$ is the split exceptional Lie algebra of type $G_2$ over $\R$. The linear realization $\gex \rightarrow \so(3,4)$ turns the space $\R^{3,4}$ into an irreducible $\gex$-module. Furthermore, $\R^{3,4}$ is the $\gex$-module corresponding to the first fundamental weight of $\gex$ and every other irreducible $\gex$-module has dimension at least $14$.
\end{proposition}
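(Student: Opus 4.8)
The plan is to reduce everything to standard facts about the split Cayley algebra $\CC$ and to the representation theory of the complex simple group of type $G_2$.

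First I would record two elementary observations about derivations of $\CC$. From $D(e) = D(e\cdot e) = 2D(e)$ one gets $D(e) = 0$ for every derivation $D$, so $D$ preserves $e^\perp$ and is determined by its restriction there; differentiating the multiplicativity $N(ab) = N(a)N(b)$ of the norm shows this restriction is skew-symmetric for the bilinear form of $\CC$ restricted to $e^\perp = \R^{3,4}$. Hence restriction gives an injective Lie algebra homomorphism onto the derivation algebra of $\CC$ acting on $e^\perp$, landing in $\so(\R^{3,4}) = \so(3,4)$; this is the linear realization, and it identifies $\gex$ with that derivation algebra. From \cite{Jacobson,SpringerVeldkamp} one also has that $\gex$ is a $14$-dimensional simple Lie algebra over $\R$ whose complexification $\g_2(\C)$ is the complex simple Lie algebra of type $G_2$.

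Next I would prove irreducibility by complexifying: $\R^{3,4} \otimes_\R \C \cong \C^7$ is the space of imaginary elements of the Cayley algebra over $\C$, carrying the natural action of $\gex \otimes_\R \C = \g_2(\C)$. This is the familiar $7$-dimensional standard module of $\g_2(\C)$, which is irreducible with highest weight the first fundamental weight $\omega_1$; see \cite{Jacobson,SpringerVeldkamp}. Since a real module whose complexification is irreducible is itself irreducible, $\R^{3,4}$ is an irreducible real $\gex$-module realizing $\omega_1$. In particular $\End_{\gex}(\R^{3,4}) = \R$, so by Schur's lemma every $\gex$-invariant nondegenerate symmetric bilinear form on $\R^{3,4}$ is a scalar multiple of the one it already carries, whose signature is $(3,4)$; since a compact Lie algebra preserves a positive-definite form on any real module, $\gex$ is not the compact real form of $\g_2(\C)$, hence it is the split real form. (Alternatively, one may simply cite that $\operatorname{Der}(\CC)$ of the split Cayley algebra is the split form of $G_2$.)

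Finally, for the dimension bound I would invoke the Weyl dimension formula for $G_2$: writing a dominant integral weight as $a\omega_1 + b\omega_2$ with $a,b$ nonnegative integers, where $\omega_1$ is as above, it gives
\[
   \dim V_{a,b} \;=\; \frac{(a+1)(b+1)(a+b+2)(a+2b+3)(a+3b+4)(2a+3b+5)}{120},
\]
and each of the six factors is positive and nondecreasing in $a$ and $b$. Evaluating gives $\dim V_{0,0}=1$, $\dim V_{1,0}=7$, $\dim V_{0,1}=14$, $\dim V_{2,0}=27$; by monotonicity $\dim V_{a,b}\ge 14$ whenever $b\ge 1$, while for $b=0$ the dimension equals $7$ only at $a=1$ and is at least $27$ for $a\ge 2$. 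Thus $\R^{3,4}$ is the only nontrivial irreducible $\g_2(\C)$-module of dimension below $14$. To transfer this to real modules I would use the standard fact that every irreducible representation of $G_2$ is self-dual and of real type (self-duality because the longest element of the Weyl group of $G_2$ acts by $-1$), so real and complex irreducible $\gex$-modules have the same dimensions $1,7,14,27,\dots$; hence every irreducible $\gex$-module other than $\R^{3,4}$ and the trivial one has dimension at least $14$.

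There is no deep step here; the only points needing care are certifying that the derivation algebra of the split $\CC$ is the split rather than the compact real form of $\g_2(\C)$ — handled above by the signature argument or by direct citation — and keeping the real-versus-complex dictionary straight so that irreducibility, the identification with $\omega_1$, and the dimension count, all most easily carried out after complexifying, transfer correctly back to the real module $\R^{3,4}$.
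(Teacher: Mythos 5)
Your proposal is correct and follows essentially the same route as the paper: identify $\gex$ as the split form of $\g_2^\C$, use the real/complex dictionary for representations of split forms, and invoke the Weyl dimension formula to see that the only nontrivial irreducible $\g_2^\C$-module of dimension less than $14$ is the $7$-dimensional one attached to the first fundamental weight. The only (harmless) difference is that you establish irreducibility of $\R^{3,4}$ by complexifying to the standard module of $\g_2(\C)$, whereas the paper deduces it from the fact that a nontrivial $7$-dimensional module cannot properly contain a nontrivial irreducible submodule of dimension at least $7$.
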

\begin{proof}
    The first claim is well known (see \cite{Jacobson,SpringerVeldkamp}).

    Next, we recall that the irreducible representations of the split form $\g$ of a simple complex Lie algebra $\g^\C$ are all real forms of the corresponding irreducible representations of $\g^\C$ (see \cite{Oni}). On the other hand, the irreducible representation of $\g_2^\C$ corresponding to the first fundamental weight has dimension $7$, and all other irreducible non-trivial representations have dimension at least $14$ (see \cite{Humphreys}). Hence, the irreducible representation of $\gex$ associated to the first fundamental weight has (real) dimension $7$. Since the linear realization homomorphism $\gex \rightarrow \so(3,4)$ is non-trivial, it defines such irreducible representation.

    Finally, we recall that Weyl's dimension formula implies that the irreducible representations of $\g_2^\C$ corresponding to the first and second fundamental weights have dimensions $7$ and $14$, and that every other irreducible representations has dimension strictly larger (see \cite{Humphreys}). Hence, the last claim follows from the above remarks on split forms.
\end{proof}

In the rest of this work, $\R^{3,4}$ will be considered as a $\gex$-module with the structure given by Proposition~\ref{prop:g22_and_R34}.

The following result establishes the uniqueness of the $\gex$-invariant scalar product on $\R^{3,4}$.

\begin{proposition}
    \label{prop:R34_scalar_product}
    The $\gex$-module $\R^{3,4}$ carries a unique, up to a constant multiple, scalar product invariant under $\gex$. In particular, any such scalar product has signature either $(3,4)$ or $(4,3)$.
\end{proposition}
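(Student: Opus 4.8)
The plan is to exploit Schur's lemma together with the fact, established in Proposition~\ref{prop:g22_and_R34}, that $\R^{3,4}$ is an irreducible $\gex$-module. A $\gex$-invariant scalar product on $V := \R^{3,4}$ is the same as a symmetric, nondegenerate bilinear form $B : V \times V \to \R$ with $B(Xv, w) + B(v, Xw) = 0$ for all $X \in \gex$. Given two such forms $B_1$ and $B_2$, with $B_1$ nondegenerate, I would form the endomorphism $A \in \End(V)$ defined by $B_2(v,w) = B_1(Av, w)$. The invariance of $B_1$ and $B_2$ forces $A$ to commute with the $\gex$-action: indeed, $B_1(A X v, w) = B_2(Xv, w) = -B_2(v, Xw) = -B_1(Av, Xw) = B_1(X A v, w)$ for all $w$, and nondegeneracy of $B_1$ gives $AX = XA$.

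Next I would invoke Schur's lemma. Since $V$ is an irreducible real $\gex$-module, the commutant $\End_{\gex}(V)$ is a real division algebra, hence $\R$, $\C$, or $\mathbb{H}$. To pin it down, I would note that $V$ is the real form of the $7$-dimensional irreducible $\g_2^\C$-module (again by Proposition~\ref{prop:g22_and_R34}); since that complex module is irreducible and $7$ is odd, $V$ cannot carry a $\gex$-invariant complex structure (a complex structure would make $\dim_\R V$ even), and likewise cannot be quaternionic. Therefore $\End_{\gex}(V) = \R$, so the module is of real type and $A = \lambda\,\mathrm{id}$ for some $\lambda \in \R$. Because $B_2$ is nondegenerate, $A$ is invertible, so $\lambda \neq 0$, and $B_2 = \lambda B_1$. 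This establishes uniqueness up to a nonzero constant multiple. (Existence of at least one such scalar product is already guaranteed: the linear realization embeds $\gex$ in $\so(3,4)$, and the defining form of signature $(3,4)$ restricted to $e^\perp$ is $\gex$-invariant by construction.)

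Finally, for the statement about signature: any $\gex$-invariant scalar product equals $\lambda$ times the form of signature $(3,4)$. If $\lambda > 0$ the signature is $(3,4)$; if $\lambda < 0$ it is $(4,3)$; and $\lambda = 0$ is excluded by nondegeneracy. Hence every $\gex$-invariant scalar product on $\R^{3,4}$ has signature $(3,4)$ or $(4,3)$, as claimed.

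The main obstacle is the division-algebra dichotomy in Schur's lemma over $\R$: one must rule out the complex and quaternionic cases, and the cleanest route is the parity/dimension argument via the complexification, using that the complex irreducible has odd dimension $7$. An alternative, if one prefers to stay over $\R$, is to argue directly that a $\gex$-invariant complex structure $J$ on $V$ would yield, together with the invariant form, an invariant $2$-form $\omega(\cdot,\cdot) = B_1(J\cdot,\cdot)$, which is impossible on an odd-dimensional space since a nondegenerate alternating form requires even dimension; a similar count rules out $\mathbb{H}$. Either way the argument is short once irreducibility is in hand.
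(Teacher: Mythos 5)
Your proof is correct and follows essentially the same route as the paper's: both reduce the question to showing $\End_{\gex}(\R^{3,4})$ is one-dimensional via Schur's lemma, and both rule out the complex and quaternionic cases by observing that an invariant complex structure is impossible on an odd-dimensional space. Your explicit construction of the intertwiner $A$ from the two forms is just the concrete version of the paper's appeal to the natural isomorphism between invariant bilinear forms and $\End_{\gex}(\R^{3,4})$.
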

\begin{proof}
    The existence of the scalar product is clear from the construction of the $\gex$-module $\R^{3,4}$ in terms the composition algebra $\CC$.

    Recall that there is a natural isomorphism of vector spaces between the space of $\gex$-invariant bilinear forms on $\R^{3,4}$ and the real algebra $\End_{\gex}(\R^{3,4})$ of homomorphisms of $\gex$-modules of $\R^{3,4}$. Hence, it is enough to prove that $\End_{\gex}(\R^{3,4})$ is $1$-dimensional.

    By Schur's Lemma and the irreducibility of $\R^{3,4}$, the algebra $\End_{\gex}(\R^{3,4})$ is a division algebra over $\R$, and so it is isomorphic to either $\R$, $\C$ or the quaternion numbers. If $\End_{\gex}(\R^{3,4})$ is not $1$-dimensional, then there is a $\gex$-invariant complex structure on $\R^{3,4}$, which is absurd since this space is odd-dimensional.
\end{proof}

We recall the following elementary property.

\begin{lemma}
    \label{lem:so(E)}
    Let $E$ be a finite dimensional vector space with scalar product $\left<\cdot, \cdot\right>$. Then, the assignment
    \[
        u\wedge v \mapsto \left<\cdot,u\right> v - \left<\cdot,v\right> u,
    \]
    defines an isomorphism $\varphi : \wedge^2 E \rightarrow \so(E)$ of $\so(E)$-modules. In particular, $\varphi$ yields an isomorphism of $\g$-modules for every Lie subalgebra $\g$ of $\so(E)$.
\end{lemma}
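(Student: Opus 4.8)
The plan is to verify two things: first, that the map $\varphi$ is a well-defined linear isomorphism from $\wedge^2 E$ onto $\so(E)$, and second, that it intertwines the natural $\so(E)$-actions on both sides, which automatically makes it an isomorphism of $\g$-modules for every subalgebra $\g \subseteq \so(E)$.

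For the first part, I would begin by checking that for $u, v \in E$ the endomorphism $\varphi(u \wedge v) = \langle\cdot,u\rangle v - \langle\cdot,v\rangle u$ is indeed skew-symmetric with respect to $\langle\cdot,\cdot\rangle$: a direct computation gives $\langle \varphi(u\wedge v)x, y\rangle = \langle x,u\rangle\langle v,y\rangle - \langle x,v\rangle\langle u,y\rangle$, which is visibly alternating in $x$ and $y$, so $\varphi(u\wedge v) \in \so(E)$. The bilinear map $(u,v) \mapsto \varphi(u\wedge v)$ is alternating, hence descends to a linear map $\varphi : \wedge^2 E \to \so(E)$. Since $\langle\cdot,\cdot\rangle$ is nondegenerate, $\dim \wedge^2 E = \binom{n}{2} = \dim \so(E)$ where $n = \dim E$, so it suffices to prove injectivity (or surjectivity). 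The cleanest way is surjectivity: picking an orthogonal basis $(e_i)$ with $\langle e_i, e_i\rangle = \varepsilon_i \in \{\pm 1\}$, one computes $\varphi(e_i \wedge e_j)$ explicitly and sees that these images span $\so(E)$ (they form, up to nonzero scalars, the standard basis of skew-symmetric matrices adapted to the form), so $\varphi$ is onto and therefore an isomorphism.

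For the equivariance, recall that the $\so(E)$-module structure on $\so(E)$ is the adjoint action $A \cdot B = [A,B]$, and on $\wedge^2 E$ it is induced from the standard action on $E$, namely $A \cdot (u \wedge v) = (Au)\wedge v + u \wedge (Av)$. I would check $\varphi(A\cdot(u\wedge v)) = [A, \varphi(u\wedge v)]$ by a short direct computation: using skew-symmetry of $A$, one has $\langle\cdot, Au\rangle = -\langle A\,\cdot\,, u\rangle$, and expanding $[A, \langle\cdot,u\rangle v - \langle\cdot,v\rangle u]$ term by term reproduces exactly $\langle\cdot,Au\rangle v - \langle\cdot,v\rangle(Au) + \langle\cdot,u\rangle(Av) - \langle\cdot,Av\rangle u$, which is $\varphi((Au)\wedge v + u\wedge(Av))$. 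Once this identity holds for all $A \in \so(E)$, it in particular holds for all $A$ in any Lie subalgebra $\g \subseteq \so(E)$, giving the final assertion about $\g$-modules.

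None of the steps presents a genuine obstacle; the only point requiring a little care is the sign bookkeeping in the equivariance computation, where the skew-symmetry of $A$ must be invoked to move $A$ across the scalar product, and one must be consistent about whether $\varphi(u\wedge v)$ acts on the left argument of the form. I would organize the computation so that every endomorphism is evaluated against a test vector $x$ and paired with another test vector $y$, reducing everything to identities among scalars $\langle x, u\rangle$, $\langle x, v\rangle$, $\langle x, Au\rangle$, etc., where the bookkeeping is transparent.
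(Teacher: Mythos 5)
Your argument is correct and complete: the skew-symmetry check, the dimension count $\binom{n}{2}$ on both sides combined with surjectivity on an orthogonal basis, and the equivariance computation using $\langle Ax,u\rangle=-\langle x,Au\rangle$ all go through exactly as you describe. The paper itself gives no proof of this lemma (it is stated as a recalled elementary fact), so there is nothing to compare against; what you have written is the standard argument and correctly supplies the omitted details.
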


As a consequence $\wedge^2 \R^{3,4} \simeq \so(3,4)$ as $\gex$-modules for the structures defined by the linear realization of $\gex$. The next proposition describes some useful properties of these $\gex$-modules.

\begin{remark}
    \label{rmk:brackets_and_modules}
    If $\h$ is a Lie algebra, then the Jacobi identity implies that the linear map
    \begin{align*}
        \wedge^2 \h &\rightarrow \h \\
            X \wedge Y &\mapsto [X,Y],
    \end{align*}
    is a homomorphism of $\h$-modules. In particular, if $\h_1$ is a Lie subalgebra of $\h$ and $V_1, V_2$ are $\h_1$-submodules of $\h$ (for the $\h_1$-module structure defined by the Lie brackets), then $[V_1, V_2]$ is an $\h_1$-module (again, by the Jacobi identity) whose irreducible $\h_1$-submodules must be among those that appear in $V_1 \otimes V_2$. Similarly, there is a corresponding remark for $[V_1, V_1]$ and $\wedge^2 V_1$. We will use these facts in the rest of this work.
\end{remark}

\begin{proposition}
    \label{prop:wedge2R34_so(R34)_over_g22}
    The following isomorphism of $\gex$-modules holds
    \[
        \wedge^2 \R^{3,4} \simeq \so(3,4) \simeq \R^{3,4} \oplus \gex,
    \]
    where $\gex$ is the $\gex$-module given by the adjoint representation. If we let $V$ denote the $\gex$-submodule of $\so(3,4)$ isomorphic to $\R^{3,4}$, then
    \[
        [V, V] = \so(3,4),
    \]
    with respect to the Lie brackets of $\so(3,4)$.
\end{proposition}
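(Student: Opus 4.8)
The plan is to establish the module decomposition first and then use it, together with Remark~\ref{rmk:brackets_and_modules}, to pin down the bracket $[V,V]$. By Lemma~\ref{lem:so(E)} applied to $E = \R^{3,4}$ we already have $\wedge^2 \R^{3,4} \simeq \so(3,4)$ as $\gex$-modules, so it remains to decompose this $14$-dimensional module. Since $\gex$ is split and acts irreducibly on $\R^{3,4}$ via the first fundamental weight (Proposition~\ref{prop:g22_and_R34}), I would compute the decomposition of $\wedge^2 V_{\omega_1}$ for $\g_2$: by Weyl's dimension formula the irreducible $\g_2^\C$-modules of smallest dimension are the trivial one, $V_{\omega_1}$ of dimension $7$, and $V_{\omega_2}$ (the adjoint) of dimension $14$, with everything else strictly larger. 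A dimension count then forces $\wedge^2 V_{\omega_1}$ (dimension $21$) to split as $V_{\omega_1} \oplus V_{\omega_2}$, i.e. $\so(3,4) \simeq \R^{3,4} \oplus \gex$ as $\gex$-modules, since no other combination of irreducibles with total dimension $21$ is available (the only pieces of dimension $\le 21$ are $\mathbf 1, \mathbf 7, \mathbf{14}$, and $7+14$ is the unique way to reach $21$ without a trivial summand — and there is no trivial summand because $\gex$ is semisimple, hence $\so(3,4)$ has trivial center, so $\End_{\gex}(\so(3,4))$ contains no invariant which would force a trivial submodule; more directly, $(\wedge^2 V_{\omega_1})^{\gex} = 0$ since an invariant element would give a $\gex$-invariant skew form on $\R^{3,4}$, contradicting oddness of $\dim \R^{3,4}$ as in Proposition~\ref{prop:R34_scalar_product}). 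This also passes from $\C$ to $\R$ because the representations of the split form are real forms of the complex ones.

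Having fixed the decomposition $\so(3,4) = V \oplus \g$ with $V \simeq \R^{3,4}$ and $\g \simeq \gex$ the adjoint summand, I would now compute $[V,V]$. By Remark~\ref{rmk:brackets_and_modules}, $[V,V]$ is a $\gex$-submodule of $\so(3,4)$ whose irreducible constituents appear among those of $\wedge^2 V \simeq \wedge^2 \R^{3,4} \simeq V \oplus \g$; so $[V,V]$ is one of $0$, $V$, $\g$, or $V\oplus\g = \so(3,4)$. To rule out $[V,V] = 0$, note that if $V$ were an abelian subalgebra then, since $\g = \gex$ is semisimple and $[V,V]=0$ would make $V$ an abelian ideal-complement, one checks $\so(3,4)$ would be a semidirect product $\gex \ltimes V$; but $\so(3,4)$ is semisimple with trivial radical, so it has no nonzero abelian ideal, and $V$ is $\gex$-invariant, hence $[\g, V] \subseteq V$ and $[V,V]=0$ would make $V$ a solvable (abelian) ideal — contradiction. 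To rule out $[V,V] = V$: this would make $V$ a subalgebra, a $7$-dimensional subalgebra of $\so(3,4)$ stable under $\operatorname{ad}(\g)$, hence an ideal, again contradicting simplicity of $\so(3,4)$ (which is simple as a Lie algebra). To rule out $[V,V]=\g$: then $V$ would still be stable under $\operatorname{ad}(\g)$ and $[V,V]\subseteq \g$, so $V \oplus \g$ with $V$ abelian modulo... — more cleanly, $[V,V] \subseteq \g$ forces $V$ to be a subalgebra only if we also knew $[V,V]\subseteq V$, which we do not; instead use that the Killing form of $\so(3,4)$ is nondegenerate and $\gex$-invariant, so it restricts nondegenerately to each isotypic component, and in particular $V$ and $\g$ are mutually orthogonal; then $[V,V]=\g$ would give, for $X,Y \in V$ and $Z \in \g$, the identity $\kappa([X,Y],Z) = -\kappa(Y,[X,Z]) \in \kappa(V, V) $ since $[X,Z]\in V$, but $[X,Y]\in\g$ so $\kappa([X,Y],Z)$ ranges over all of $\kappa(\g,\g)$ as $Z$ varies, forcing $\kappa(\g,\g)\subseteq\kappa(V,V)=0$ on the cross terms — wait, this needs care; the cleanest argument is simply that $[V,V]$ is a nonzero $\gex$-submodule (by the first two exclusions, it is nonzero) and the ideal generated by $V$ in $\so(3,4)$ is all of $\so(3,4)$ by simplicity, and that ideal equals $V + [V,V] + [\g,V] + \dots = V + [V,V]$ since $[\g,V]\subseteq V$; hence $\so(3,4) = V + [V,V]$, which forces $[V,V] \supseteq \g$, and since $[V,V] \subseteq \so(3,4)$ and we have just seen it is not contained in $V$ alone while it must contain a copy of $\g$, together with $[V,V]\ne V$, the only option consistent with $V + [V,V] = \so(3,4)$ and $[V,V]$ being a sum of constituents from $\{V,\g\}$ is $[V,V] = \so(3,4)$.

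The main obstacle I expect is the last bullet of the previous paragraph: cleanly distinguishing the cases $[V,V]=\g$ versus $[V,V]=\so(3,4)$ using only module-theoretic input. The decisive fact is that $V$ is \emph{not} a subalgebra and not an ideal (by simplicity of $\so(3,4)$), so the ideal it generates is everything; since $[\g,V]\subseteq V$, that ideal is $V + [V,V]$, whence $[V,V]$ must supply the adjoint summand $\g$, and since by Schur the projection of $[V,V]$ to the $V$-isotypic part is either $0$ or all of $V$ — and if it were $0$ we would have the proper submodule $\g = [V,V]$ with $V+\g = \so(3,4)$, which is fine dimensionally, so I must exclude $[V,V]=\g$ separately. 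For that I would argue: $[V,V]=\g$ means $[V,V]\subseteq\g$; combined with $[\g,V]\subseteq V$ and $[\g,\g]\subseteq\g$, this exhibits a $\mathbb Z/2$-grading $\so(3,4) = \g \oplus V$ with $\g$ a subalgebra and $V$ its odd part; then $V$ is a $\g$-submodule on which the bracket with $\g$ is the isotropy representation, and $[V,V]=\g$ is consistent — so instead I exclude it by a direct structural fact about $\so(3,4)$: a $\mathbb Z/2$-grading with even part $\gex$ would make $(\so(3,4),\gex)$ a symmetric pair, but the rank and dimension constraints (the symmetric spaces with isometry algebra $\so(3,4)$ are the pseudo-hyperbolic/Grassmannian types, none of which has isotropy algebra of type $G_2$) rule this out. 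Alternatively, and more in the spirit of this paper, one notes that such a grading would give a $\gex$-invariant symmetric bilinear form on $V \simeq \R^{3,4}$ valued in $\g$, i.e. a nonzero element of $\Hom_{\gex}(\mathrm{Sym}^2 \R^{3,4}, \gex)$, and one computes $\mathrm{Sym}^2 \R^{3,4}$ does not contain the adjoint $\gex = V_{\omega_2}$ as a constituent (for $\g_2$, $\mathrm{Sym}^2 V_{\omega_1} \simeq \mathbf 1 \oplus V_{\omega_2}' \oplus \cdots$ with the adjoint appearing in $\wedge^2$, not $\mathrm{Sym}^2$) — concretely $\mathrm{Sym}^2 \mathbf 7 = \mathbf 1 \oplus \mathbf{27}$ for $G_2$, which has no $\mathbf{14}$, so no such grading exists, forcing $[V,V] = \so(3,4)$.
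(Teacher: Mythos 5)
Your overall strategy is sound and, for the decisive step, coincides with the paper's: the paper also reduces $[V,V]$ to one of $0$, $V$, $\gex$, $\so(3,4)$ via Remark~\ref{rmk:brackets_and_modules}, kills the first two by observing that $[V,V]\subset V$ would make $V$ a proper ideal of the simple algebra $\so(3,4)$, and excludes $[V,V]=\gex$ exactly as you do in your primary argument, by noting this would make $(\so(3,4),\gex)$ a symmetric pair and checking against Berger's classification (Table~II of \cite{Berger}). However, there are two concrete defects. First, in the decomposition step your dimension count is incomplete: the claim that $7+14$ is the only way to write $21$ as a sum of dimensions from $\{7,14\}$ without trivial summands overlooks $21=7+7+7$, so the count alone does not force a $\mathbf{14}$ to appear. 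The fix is cheap and is how the paper proceeds: $\gex$ sits inside $\so(3,4)$ as a subalgebra, hence is an $\operatorname{ad}(\gex)$-invariant subspace isomorphic to the ($14$-dimensional, irreducible) adjoint module, so one only needs to identify the $7$-dimensional complement $V$, for which the dichotomy ``$V\simeq\R^{3,4}$ or $V$ is a sum of trivial modules'' from Proposition~\ref{prop:g22_and_R34} applies; the trivial case is then excluded by your ideal argument (the paper's route) rather than by the invariant-skew-form argument, though the latter also works once you use irreducibility of $\R^{3,4}$ and its self-duality to pass from an invariant element of $\wedge^2\R^{3,4}$ to a nondegenerate invariant alternating form.

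Second, your proposed ``alternative'' exclusion of $[V,V]=\gex$ via $\operatorname{Sym}^2$ is wrong and should be deleted: the Lie bracket restricted to $V\times V$ is alternating, so a $\mathbb{Z}/2$-grading $\so(3,4)=\gex\oplus V$ with $[V,V]\subseteq\gex$ produces a nonzero element of $\Hom_{\gex}(\wedge^2\R^{3,4},\gex)$, not of $\Hom_{\gex}(\operatorname{Sym}^2\R^{3,4},\gex)$ --- and $\wedge^2\mathbf{7}\simeq\mathbf{7}\oplus\mathbf{14}$ \emph{does} contain the adjoint module (this is the content of the first half of the proposition), so no contradiction arises from representation theory alone. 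This is precisely why the symmetric-pair classification (or some other genuinely structural input about $\so(3,4)$) cannot be avoided here; your first argument for this step is the correct one, and it is the paper's.
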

\begin{proof}
    Since $\gex$ is $\gex$-submodule of $\so(3,4)$ (for the structure mentioned above) and since $\gex$ is simple, there is a $\gex$-submodule $V$ of $\so(3,4)$ such that
    \[
        \so(3,4) = \gex \oplus V.
    \]
    In particular, $V$ has dimension $7$. By Proposition~\ref{prop:g22_and_R34}, either $V$ is a direct sum of trivial $1$-dimensional modules or $V \simeq \R^{3,4}$ as $\gex$-modules. If the former occurs, then Remark~\ref{rmk:brackets_and_modules} implies that $[V,V]$ is a sum of $1$-dimensional $\gex$-modules as well and so $[V,V] \subset V$. This implies that $V$ is a proper ideal of $\so(3,4)$, which is absurd. This proves the first claim.

    On the other hand, by Remark~\ref{rmk:brackets_and_modules} the space $[V,V]$ is either $0$ or a sum of the irreducible $\gex$-modules that appear in $\so(3,4)$. We have already ruled out that $[V,V] \subset V$. If $[V,V] \subset \gex$, then $(\so(3,4), \gex)$ is a symmetric pair. But an inspection of Table~II from \cite{Berger} shows that no such symmetric pair exists. Therefore, the only possibility left is to have $[V,V] = \so(3,4)$.
\end{proof}

\begin{remark}
    \label{rmk:wedge2R34_so(R34)_over_g22}
    Note that $\gex$, as a module over itself, is the irreducible representation of $\gex$ corresponding to the second fundamental weight. Hence, Proposition~\ref{prop:wedge2R34_so(R34)_over_g22} says that the $\gex$-module $\wedge^2 \R^{3,4} \simeq \so(3,4)$ is the sum of the irreducible representations corresponding to the fundamental weights.
\end{remark}

\section{Centralizer of the isometric $\Gex$-action}
\label{sec:centralizer}

In this section we specialize some known results for actions of non-compact simple Lie groups to the our case of $\Gex$-actions. Our main references are \cite{OQ-SO} and \cite{OQ-U}.

We will assume the hypotheses of Theorem~\ref{thm:diff_type} through out this section. Under such conditions, it is well known that the $\Gex$-action on $M$ is everywhere locally free (see \cite{Szaro}). Hence, the set of orbits defines a foliation $\OO$ on $M$, whose tangent bundle will be denoted by $T\OO$. In particular, the map $M \times \gex \rightarrow T\OO$ given by the assignment $(x, X) \mapsto X^*_x$ is an isomorphism of bundles. We recall that for $X \in \gex$ we denote by $X^*$ the vector field on $M$ whose local flow is $\exp(tX)$. Also, we will denote by $T\OO^\perp$ the bundle whose fibers are the subspaces orthogonal to the fibers of $T\OO$. Then, the condition $\dim(M) \leq 21$ ensures that both $T\OO$ and $T\OO^\perp$ are non-degenerate and so that $TM = T\OO\oplus T\OO^\perp$ (see Lemma~1.4 from \cite{OQ-SO}). In what follows, we will use the same symbols $\OO$, $T\OO$ and $T\OO^\perp$ for the corresponding objects on $\widetilde{M}$.

The following result is fundamental for our work. See Proposition~2.3 from \cite{QuirogaCh} for a proof for arbitrary non-compact simple Lie groups (see also \cite{Gromov,ZimmerAut}). For a pseudo-Riemannian manifold $N$ we denote by $\Kill(N)$ the Lie algebra of globally defined Killing vector fields on $N$. Also, we will denote by $\Kill_0(N,x)$ the Lie subalgebra of $\Kill(N)$ consisting of those vector fields that vanish at $x$. 

\begin{proposition}
    \label{prop:g(x)}
    For $M$ as above, there is a dense conull subset $A \subset \widetilde{M}$ such that for every $x \in A$ the following properties are satisfied.
    \begin{enumerate}
    \item There is a homomorphism $\rho_x : \gex \rightarrow \Kill(\widetilde{M})$ which is an isomorphism onto its image $\rho_x(\gex)$.
    \item Every element of $\rho_x(\gex)$ vanishes at $x$: $\rho_x(\gex) \subset \Kill_0(\widetilde{M},x)$.
    \item For every $X,Y \in \gex$ we have:
        \[
            [\rho_x(X),Y^*] = [X,Y]^* = -[X^*,Y^*].
        \]
        In particular, the elements in $\rho_x(\gex)$ and their corresponding local flows preserve both $\OO$ and $T\OO^\perp$.
	\end{enumerate}
\end{proposition}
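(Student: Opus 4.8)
The plan is to obtain this by specializing the Gromov--Zimmer machinery for isometric actions of non-compact simple Lie groups to $G=\Gex$; as stated, the proposition is essentially Proposition~2.3 of \cite{QuirogaCh} (compare \cite{Gromov,ZimmerAut}), so the work consists of recalling the mechanism and checking that its hypotheses hold in our situation. First I would record the standing facts that feed the machinery: $h$ is a rigid geometric structure in Gromov's sense which, together with the orbit foliation $\OO$, is preserved by the lifted $\Gex$-action on $\widetilde M$; the action preserves the finite volume, so Zimmer's cocycle superrigidity is available for the relevant cocycle; and, since $(M,h)$ is real-analytic and complete and $\widetilde M$ is simply connected, Nomizu's extension theorem identifies local Killing fields of $\widetilde M$ with global ones, so it suffices to produce the desired vector fields near a point. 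Analyticity is also what will force the exceptional set to be a proper analytic subset, hence null, so the good set $A$ comes out dense and conull.

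The core of the argument is the construction of $\rho_x$. The relevant object is the centralizer $\HH\subset\Kill(\widetilde M)$ of the lifted $\Gex$-action. Gromov's theory applied to the $\Gex$-invariant rigid structure, together with cocycle superrigidity for the simple group $\Gex$, shows that on a dense conull set $A\subset\widetilde M$ the algebra $\HH$ is large at $x$, in the sense that there is an injective Lie algebra homomorphism $n_x:\gex\to\HH$ with $n_x(X)_x=X^*_x$ for every $X\in\gex$. Setting
\[
  \rho_x(X):=n_x(X)-X^*,
\]
property (2) is immediate since $\rho_x(X)_x=n_x(X)_x-X^*_x=0$, and $\rho_x$ is injective because $X\mapsto X^*$ is a bundle isomorphism onto $T\OO$ and $n_x$ is injective, so $\rho_x(X)=0$ would force $X^*_x=n_x(X)_x$ to vanish and hence $X=0$. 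For (3), since $n_x(\gex)\subset\HH$ commutes with every $Y^*$ and $X\mapsto X^*$ is an anti-homomorphism for the left action, one gets
\[
  [\rho_x(X),Y^*]=-[X^*,Y^*]=[X,Y]^*,
\]
and the same computation applied to $[\rho_x(X),\rho_x(Y)]$, using that $n_x$ is a homomorphism, shows that $\rho_x$ is itself a homomorphism, which gives (1). Finally, $[\rho_x(X),Y^*]=[X,Y]^*$ is again tangent to $\OO$, so the local flow of $\rho_x(X)$ carries fundamental vector fields of the $\Gex$-action to fundamental vector fields and thus preserves $T\OO$; being local isometries, these flows also preserve the complementary distribution $T\OO^\perp$, which is a genuine sub-bundle because $\dim M\le 21$. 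Denseness of $A$ is automatic, the lifted finite-volume measure having full support.

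The main obstacle is the construction of $n_x$ with $n_x(X)_x=X^*_x$ on a conull set, i.e.\ the statement that $\HH$ is large at $x$; this is the substantive output of the machinery and is not a formal consequence of the simplicity of $\Gex$. Carrying it out involves passing to a bundle of higher-order frames adapted both to $h$ and to $\OO$, identifying the algebraic hull of the restricted $\Gex$-action on that bundle, using cocycle superrigidity to see that the hull contains a copy of $\gex$ acting through a representation compatible with the adjoint action, and then invoking Gromov's results on the Killing fields of an analytic rigid structure to convert the resulting algebraic data into genuine local (hence, by analyticity and completeness, global) Killing fields centralizing the action. That one obtains a Lie algebra section $n_x$ rather than merely a linear one, normalized so that $n_x(X)_x=X^*_x$ on a conull set, is the technical heart of \cite{QuirogaCh}; everything else in the statement is bookkeeping resting on the facts of Section~\ref{sec:G22} and the splitting $TM=T\OO\oplus T\OO^\perp$.
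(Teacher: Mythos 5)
Your overall strategy matches the paper exactly: the paper offers no proof of this proposition, deferring entirely to Proposition~2.3 of \cite{QuirogaCh} (and \cite{Gromov,ZimmerAut}), and your sketch correctly identifies that the substantive content---the existence, on a dense conull set, of an injective homomorphism $n_x:\gex\to\HH$ normalized by $n_x(X)_x=X^*_x$---is the output of the Gromov--Zimmer machinery in that reference, with everything else being formal bookkeeping. Your derivations of (2), of the bracket identity in (3), and of the homomorphism property of $\rho_x$ from the fact that $n_x(\gex)$ centralizes the fundamental vector fields are all correct (note that your $n_x$ is exactly the paper's $\widehat{\rho}_x$, so you are running the paper's construction in reverse).

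One step is wrong as written: your injectivity argument. From $\rho_x(X)=0$ you get $n_x(X)=X^*$ as vector fields, and evaluating at $x$ yields only the tautology $X^*_x=n_x(X)_x=X^*_x$; nothing is forced to vanish, so the stated inference ``hence $X=0$'' does not follow. The correct (and equally short) argument uses simplicity of $\gex$ together with your own property (3): once $\rho_x$ is known to be a Lie algebra homomorphism, $\ker\rho_x$ is an ideal, hence $0$ or $\gex$; if it were all of $\gex$, then $[X,Y]^*=[\rho_x(X),Y^*]=0$ for all $X,Y$, forcing $\gex$ to be abelian since $Z\mapsto Z^*$ is injective, a contradiction. (Equivalently: $\rho_x(X)=0$ would place $X^*$ in the centralizer $\HH$, making $\mathrm{ad}(X)=0$.) With that repair the proposal is a faithful account of the cited result.
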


The following local homogeneity result is well known and it is a particular case of Gromov's open dense orbit theorem. For its proof for general actions of non-compact simple Lie groups we refer to \cite{Gromov} and \cite{ZimmerAut}.
\begin{proposition}
    \label{prop:GromovOpenDense}
    For $M$ satisfying the above conditions, there is an open dense conull subset $U \subset \widetilde{M}$ such that for every $x \in U$ the evaluation map $ev_x : \HH \rightarrow T_x \widetilde{M}$ given by $Z \mapsto Z_x$ is surjective.
\end{proposition}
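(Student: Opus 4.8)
The plan is to deduce this from Gromov's open-dense orbit theorem together with the analytic continuation of local Killing fields, and then to refine the resulting spanning family so that it lies inside the centralizer $\HH$, using Proposition~\ref{prop:g(x)} and the representation theory of $\gex$.

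First I would observe that the pulled-back metric on $\widetilde{M}$ is an analytic rigid geometric structure of algebraic type, preserved by the lifted $\Gex$-action, which still has a dense orbit. Gromov's open-dense orbit theorem (see \cite{Gromov}, and \cite{ZimmerAut} for the formulation adapted to isometric actions of non-compact simple groups) then yields an open dense subset of $\widetilde{M}$ on which the germs at $x$ of \emph{local} Killing fields already span $T_x\widetilde{M}$; since the metric is analytic, the complement of this set lies in a proper analytic subset, hence is conull. Because $\widetilde{M}$ is simply connected, complete and analytic, every germ of a local Killing field is the germ of a globally defined Killing field on $\widetilde{M}$; combining the two facts gives an open dense conull set $U_0$ on which $ev_x : \Kill(\widetilde{M}) \to T_x\widetilde{M}$ is surjective.

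It remains to replace $\Kill(\widetilde{M})$ by $\HH$. Fix $x$ in the intersection of $U_0$ with the conull set $A$ of Proposition~\ref{prop:g(x)}. The orbit directions $T\OO_x$ are already reached by $\HH$: by Proposition~\ref{prop:g(x)}(3) the field $\rho_x(X)+X^*$ commutes with every $Y^*$, hence belongs to $\HH$, and since $\rho_x(X)$ vanishes at $x$ it evaluates to $X^*_x$, so these fields span $T\OO_x$. For the transverse directions one uses that, via $X \cdot Z := [\rho_x(X),Z]$, the finite-dimensional Lie algebra $\Kill(\widetilde{M})$ is a $\gex$-module, that $\Kill_0(\widetilde{M},x)$ and $\HH$ are submodules, and that with respect to this structure $ev_x$ is $\gex$-equivariant, where $T_x\widetilde{M}=T\OO_x\oplus T\OO^\perp_x$ carries the isotropy representation of $\rho_x(\gex)$ (which preserves both summands by Proposition~\ref{prop:g(x)}(3)). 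Now the hypothesis $\dim M\le 21$ enters: $\dim T\OO^\perp_x=\dim\widetilde{M}-14\le 7$, so by Proposition~\ref{prop:g22_and_R34} the $\gex$-module $T\OO^\perp_x$ is either trivial or a copy of $\R^{3,4}$. One then checks, by tracking $\gex$-isotypic components and comparing with the already established surjectivity of $ev_x$ on all of $\Kill(\widetilde{M})$, that in either case $\HH$ must surject onto $T\OO^\perp_x$ as well; hence $ev_x(\HH)=T_x\widetilde{M}$. This bookkeeping is the specialization to $\gex$ of the corresponding arguments in \cite{OQ-SO} and \cite{OQ-U}, with Proposition~\ref{prop:g22_and_R34} playing the role of the classification of low-dimensional $\so(p,q)$-modules used there.

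The step I expect to be the main obstacle is precisely this last refinement. The naive evaluation $\Kill(\widetilde{M})\to T_x\widetilde{M}$ fails to be equivariant for the obvious action $Z\mapsto[Z,X^*]$ — curvature terms intervene — so one is forced to work with the $\rho_x$-twisted action and, at the $1$-jet level, with the curvature-corrected module structure on $T_x\widetilde{M}\oplus\so(T_x\widetilde{M})$. Keeping this straight while exploiting the dimension bound is where the real work lies; in particular it is the bound $\dim M\le 21$ that keeps $T\OO^\perp_x$ small enough for Proposition~\ref{prop:g22_and_R34} to leave only the two tractable possibilities, and without it further nontrivial $\gex$-modules could appear in $\Kill(\widetilde{M})$ and the centralizer would no longer span the tangent space.
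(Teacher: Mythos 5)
The paper does not actually prove this proposition: it is quoted as a known result (Gromov's centralizer theorem) with references to \cite{Gromov} and \cite{ZimmerAut}, valid for isometric actions of arbitrary non-compact simple Lie groups. So the relevant question is whether your reconstruction is sound, and there it has a genuine gap. Your first two steps are fine: the open-dense orbit theorem plus the Nomizu--Amores extension of local Killing fields on the simply connected analytic manifold $\widetilde{M}$ do give an open dense conull set on which $ev_x : \Kill(\widetilde{M}) \rightarrow T_x\widetilde{M}$ is onto, and the observation that $\widehat{\rho}_x(X)=\rho_x(X)+X^*$ lies in $\HH$ and evaluates to $X^*_x$ (so that $T_x\OO\subset ev_x(\HH)$) is exactly right. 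The problem is the passage from $\Kill(\widetilde{M})$ to the centralizer $\HH$, which you reduce to ``tracking $\gex$-isotypic components.'' The only module structure for which $ev_x$ is equivariant is the one defined through $\rho_x$ (bracketing with the isotropy copy $\rho_x(\gex)$), whereas $\HH$ is the commutant of the fields $Y^*$, $Y\in\gex$; these are different conditions, and the $\rho_x$-isotypic decomposition of $\Kill(\widetilde{M})$ gives no a priori control on where $\HH$ sits inside it. You yourself note that $Z\mapsto[Z,Y^*]$ is not compatible with $ev_x$, but you never supply the replacement argument, and none exists at the level of finite-dimensional representation theory alone: the centralizer theorem is not a formal consequence of the open-dense orbit theorem, and its proof in \cite{Gromov} and \cite{ZimmerAut} requires additional dynamical input (Borel density/algebraic hull arguments applied to the $G$-action on the bundle of local Killing algebras). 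That is precisely why the paper cites it rather than deriving it.

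A second, related error: you assert that the hypothesis $\dim M\leq 21$ is what makes the proposition true, and that without it ``the centralizer would no longer span the tangent space.'' This is false. The centralizer theorem holds with no dimension restriction whatsoever, which is the form in which the paper invokes it. In this paper the bound $\dim M\leq 21$ is used only (i) to guarantee that $T\OO$ and $T\OO^\perp$ are non-degenerate so that $T\widetilde{M}=T\OO\oplus T\OO^\perp$, and (ii) much later, in Proposition~\ref{prop:HH_structure} and the arguments of Section~\ref{sec:main_proofs}, to force $T_x\OO^\perp\simeq\R^{3,4}$ and pin down $\HH\simeq\so(3,4)$. Importing the dimension bound into the proof of Proposition~\ref{prop:GromovOpenDense} both mislocates where the hypothesis does its work and suggests a proof strategy (classifying $T_x\OO^\perp$ as a $\gex$-module first) that is circular here, since the module structure on $T_x\OO^\perp$ used throughout the paper is itself built from the surjectivity of $ev_x$ on $\HH$ that you are trying to establish.
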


For $A$ as in Proposition~\ref{prop:g(x)}, let $x\in A$ be given and consider the map
\begin{align*}
    \widehat{\rho}_x : \gex &\rightarrow \Kill(\widetilde{M}) \\
    \widehat{\rho}_x(X) &= \rho_x(X) + X^*.
\end{align*}
Then, Proposition~\ref{prop:g(x)}(3) implies that $\widehat{\rho}_x$ is an injective homomorphism of Lie algebras. We will denote its image by $\GG(x)$, which is thus a Lie subalgebra of $\HH$ isomorphic to $\gex$. In particular, the Lie brackets induce a $\gex$-module structure on $\HH$. Furthermore, through the isomorphism $\widehat{\rho}_x$ between $\GG(x)$ and $\gex$ every $\GG(x)$-module can be considered as a $\gex$-module.

Proposition~\ref{prop:GromovOpenDense} allows us to define a $\GG(x)$-module structure on $T_x\widetilde{M}$ through the following construction. 

Let $A$ and $U$ be as in Propositions~\ref{prop:g(x)} and \ref{prop:GromovOpenDense}, respectively. Fix some point $x \in A \cap U$. We consider the map $\lambda_x : \GG(x) \rightarrow \so(T_x \widetilde{M})$ given by 
\[
	\lambda_x(Z)(v) = [Z,V]_x,
\] 
where $V \in \HH$ is such that $V_x = v$. It is easy to see that this is a well defined homomorphism of Lie algebras. Furthermore, it is also known that the evaluation map $ev_x : \HH \rightarrow T_x \widetilde{M}$ is a homomorphism of $\GG(x)$-modules that satisfies $ev_x(\GG(x)) = T_x \OO$. In particular, $T_x\OO$ is a $\GG(x)$-module isomorphic to the $\gex$-module $\gex$. As a consequence the subspace $T_x\OO^\perp$ is a $\GG(x)$-submodule of $T_x\widetilde{M}$.

For $x \in A\cap U$, in the rest of this work we consider $\HH$ and $T_x\widetilde{M}$ endowed with the $\GG(x)$-module structures defined above. 

On the other hand, we denote by $\HH_0(x) = \ker(ev_x)$ which, by the previous remarks, is a $\GG(x)$-submodule of $\HH$. Also, it is clear that $\HH_0(x)$ is a Lie subalgebra of $\HH$ as well. In particular, $\GG(x) + \HH_0(x)$ is a Lie subalgebra of $\HH$ that contains $\HH_0(x)$ as an ideal. Hence, $\HH$ can be considered as a module over $\GG(x) + \HH_0(x)$ through the Lie brackets.

By Proposition~3.5 from \cite{OQ-U} we can extend $\lambda_x$ from $\GG(x)$ to the map
\begin{align*}
    \lambda_x : \GG(x) + \HH_0(x) &\rightarrow \so(T_x\widetilde{M}) \\
    \lambda_x(Z)(v) &= [Z,V]_x,
\end{align*}
where for a given $v \in T_x\widetilde{M}$ we choose $V \in \HH$ such that $V_x = v$. As before, it is proved that $\lambda_x$ is a well defined homomorphism of Lie algebras, thus defining a $\GG(x) + \HH_0(x)$-module structure on $T_x\widetilde{M}$ for which both $T_x\OO$ and $T_x\OO^\perp$ are submodules. Furthermore, the evaluation map $ev_x : \HH \rightarrow T_x\widetilde{M}$ is a homomorphism of $\GG(x) + \HH_0(x)$-modules. In particular, we have a representation 
\begin{align*}
    \lambda_x^\perp : \GG(x) + \HH_0(x) &\rightarrow \so(T_x\OO^\perp)\\
    \lambda_x^\perp(Z) &= \lambda_x(Z)|_{T_x\OO^\perp}.
\end{align*}
Furthermore, by Proposition~3.6 from \cite{OQ-U}, the restriction $\lambda_x^\perp : \HH_0(x) \rightarrow \so(T_x\OO^\perp)$ is injective and its image is both a Lie subalgebra and a $\GG(x)$-submodule of $\so(T_x\OO^\perp)$. 

The fact that $\GG(x) \simeq \gex$ as a Lie algebra allows us to obtain the following decomposition of the centralizer $\HH$.

\begin{proposition}
    \label{prop:HHwithVV(x)}
    Let $A$ and $U$ be as in Propositions~\ref{prop:g(x)} and \ref{prop:GromovOpenDense}, respectively. For a fixed point $x \in A \cap U$ there exists a $\GG(x)$-submodule $\VV(x)$ of $\HH$ such that
    \[
        \HH = \GG(x) \oplus \HH_0(x) \oplus \VV(x), \quad T_x\OO^\perp = ev_x(\VV(x)).
    \]
\end{proposition}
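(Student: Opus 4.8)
The plan is to exhibit $\VV(x)$ as a $\GG(x)$-module complement to $\GG(x)\oplus\HH_0(x)$ inside $\HH$, and then to verify that its image under $ev_x$ is exactly $T_x\OO^\perp$. First I would recall that $\GG(x)\simeq\gex$ is a simple Lie algebra, hence semisimple and in particular reductive, so every $\GG(x)$-module is completely reducible (Weyl's theorem). Since $\GG(x)$ and $\HH_0(x)$ are both $\GG(x)$-submodules of $\HH$ — the former because $\GG(x)$ is a subalgebra normalized by itself (indeed $[\GG(x),\GG(x)]\subset\GG(x)$), the latter because $\ker(ev_x)$ was already shown above to be a $\GG(x)$-submodule — their sum $\GG(x)+\HH_0(x)$ is again a $\GG(x)$-submodule of $\HH$. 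I first check that this sum is direct: $\GG(x)\cap\HH_0(x)$ is the set of elements of $\GG(x)=\widehat\rho_x(\gex)$ vanishing at $x$; but $ev_x(\GG(x))=T_x\OO$, and the composition $\gex\xrightarrow{\widehat\rho_x}\GG(x)\xrightarrow{ev_x}T_x\OO$, $X\mapsto X^*_x$, is an isomorphism since the $\Gex$-action is locally free, so $\GG(x)\cap\HH_0(x)=0$. By complete reducibility I then choose a $\GG(x)$-submodule $\VV(x)$ of $\HH$ with $\HH=\bigl(\GG(x)\oplus\HH_0(x)\bigr)\oplus\VV(x)$.

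It remains to identify $ev_x(\VV(x))$ with $T_x\OO^\perp$. Since $ev_x$ is a homomorphism of $\GG(x)$-modules (this was recalled above) and is surjective onto $T_x\widetilde M$ by Proposition~\ref{prop:GromovOpenDense}, we get
\[
    T_x\widetilde M = ev_x(\GG(x)) + ev_x(\HH_0(x)) + ev_x(\VV(x)) = T_x\OO + ev_x(\VV(x)),
\]
using $ev_x(\GG(x))=T_x\OO$ and $ev_x(\HH_0(x))=0$. Moreover $ev_x(\VV(x))$ is a $\GG(x)$-submodule of $T_x\widetilde M$, and since $TM=T\OO\oplus T\OO^\perp$ (the nondegeneracy in the range $\dim M\le 21$, cited from \cite{OQ-SO}) with $T_x\OO$ a $\GG(x)$-submodule, the complementary submodule $T_x\OO^\perp$ is also a $\GG(x)$-submodule; by complete reducibility of $T_x\OO^\perp$ and the fact that $T_x\widetilde M = T_x\OO\oplus T_x\OO^\perp$ with $ev_x(\VV(x))$ surjecting onto the $T_x\OO^\perp$-factor, one concludes $ev_x(\VV(x))\supseteq T_x\OO^\perp$. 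For the reverse inclusion, I argue that $\VV(x)$ can in fact be chosen inside a $\GG(x)$-complement of $\HH_0(x)$ whose $ev_x$-image avoids $T_x\OO$: more precisely, replace the naive complement by $\VV(x)' = \{\,V - ev_x(V)^{\sharp}\,\}$ where $ev_x(V)^{\sharp}$ denotes the $T_x\OO$-component re-lifted into $\GG(x)$ via the isomorphism $\GG(x)\xrightarrow{\sim}T_x\OO$; this adjustment is $\GG(x)$-equivariant and forces $ev_x(\VV(x)')\subseteq T_x\OO^\perp$, whence equality. (Alternatively, and more cleanly: choose the $\GG(x)$-complement $\VV(x)$ to $\GG(x)\oplus\HH_0(x)$ so that it lies in $ev_x^{-1}(T_x\OO^\perp)$, which is possible because $ev_x^{-1}(T_x\OO^\perp)$ is a $\GG(x)$-submodule of $\HH$ containing $\HH_0(x)$ and mapping onto $T_x\OO^\perp$, and then restrict complete reducibility to this submodule.)

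The main obstacle is the last point: merely picking an abstract $\GG(x)$-module complement $\VV(x)$ to $\GG(x)\oplus\HH_0(x)$ guarantees $ev_x(\VV(x))+T_x\OO=T_x\widetilde M$ but not $ev_x(\VV(x))\subseteq T_x\OO^\perp$, since a priori $ev_x(\VV(x))$ could meet $T_x\OO$. The fix is to do the splitting relative to the $\GG(x)$-submodule $\HH^\perp(x):=ev_x^{-1}(T_x\OO^\perp)$ rather than inside all of $\HH$: one checks $\HH^\perp(x)$ is a $\GG(x)$-submodule (preimage of a submodule under an equivariant map), contains $\HH_0(x)=\ker(ev_x)$, and satisfies $ev_x(\HH^\perp(x))=T_x\OO^\perp$ together with $\HH = \GG(x)\oplus\HH^\perp(x)$ (the latter because $T_x\widetilde M=T_x\OO\oplus T_x\OO^\perp$ pulls back correctly and $\GG(x)\cap\HH^\perp(x)=\GG(x)\cap ev_x^{-1}(T_x\OO^\perp)$, which is trivial as $ev_x|_{\GG(x)}$ is injective with image $T_x\OO$ meeting $T_x\OO^\perp$ only in $0$). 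Applying complete reducibility to split $\HH^\perp(x)=\HH_0(x)\oplus\VV(x)$ as $\GG(x)$-modules then yields a $\VV(x)$ with all the required properties: $\HH=\GG(x)\oplus\HH_0(x)\oplus\VV(x)$ and $ev_x(\VV(x))=ev_x(\HH^\perp(x))=T_x\OO^\perp$, the last equality because $ev_x$ kills $\HH_0(x)$ and is injective on $\VV(x)$ with the right image.
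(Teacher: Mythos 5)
Your proposal is correct, and it follows the argument the paper intends (the paper states this proposition without proof, introducing it only with the remark that $\GG(x)\simeq\gex$): complete reducibility of the finite-dimensional $\GG(x)$-module $\HH$ under the semisimple algebra $\GG(x)$. Your final, ``cleaner'' version is the right way to handle the one genuine subtlety -- that an arbitrary $\GG(x)$-complement of $\GG(x)\oplus\HH_0(x)$ need not map into $T_x\OO^\perp$ -- namely by splitting off $\HH_0(x)$ inside the submodule $ev_x^{-1}(T_x\OO^\perp)$, which is $\GG(x)$-invariant because $ev_x$ is equivariant and $T_x\OO^\perp$ is a submodule; this is a point the paper glosses over entirely. (One could also note that the naive complement works a posteriori, since $\dim T_x\OO^\perp\leq 7<14$ forces $ev_x(\VV(x))$ to have no constituent isomorphic to the adjoint module and hence to lie in the isotypic complement of $T_x\OO$, but your argument avoids needing this.)
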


Next we consider the analytic map
\[
    \omega : T\widetilde{M} \rightarrow \gex
\]
given by the orthogonal projection onto $T\OO$ followed by the fiberwise isomorphism $T\OO \rightarrow \gex$ described at the beginning of this section. Let us also consider the analytic $\gex$-valued $2$-form $\Omega$ defined by
\[
    \Omega_x = d\omega_x|_{\wedge^2 T_x\OO^\perp},
\]
for every $x \in \widetilde{M}$. If $X, Y$ are smooth sections of $T\OO^\perp$, then $\omega(X) = \omega(Y) = 0$ and so we have
\[
    \Omega(X,Y) = X(\omega(Y)) - Y(\omega(X)) - \omega([X,Y]) = -\omega([X,Y]),
\]
which implies the following result (see \cite{Gromov,QuirogaCh}).
\begin{lemma}
    \label{lem:TOperp-Omega}
    For $G$ and $M$ as above, assume that $T\widetilde{M} = T\OO \oplus T\OO^\perp$. Then, $T\OO^\perp$ is integrable if and only if $\Omega \equiv 0$.
\end{lemma}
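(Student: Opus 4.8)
The plan is to reduce the claim to the classical Frobenius integrability criterion applied to the distribution $T\OO^\perp$. Recall that $T\OO^\perp$ is integrable (in the sense that it is tangent to a foliation of $\widetilde{M}$) if and only if it is involutive, i.e.\ $[X,Y]$ is a section of $T\OO^\perp$ whenever $X,Y$ are. Since $TM = T\OO \oplus T\OO^\perp$ under the standing hypothesis, a vector field $W$ lies in $T\OO^\perp$ precisely when its $T\OO$-component vanishes, which by definition of $\omega$ (the orthogonal projection onto $T\OO$ composed with the fiberwise isomorphism $T\OO \to \gex$) is equivalent to $\omega(W) \equiv 0$. So involutivity of $T\OO^\perp$ is equivalent to the statement: $\omega([X,Y]) \equiv 0$ for all sections $X,Y$ of $T\OO^\perp$.

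First I would record the computation already displayed in the excerpt: if $X,Y$ are sections of $T\OO^\perp$ then $\omega(X) = \omega(Y) = 0$, hence from the standard formula for the exterior derivative of a (vector-valued) $1$-form,
\[
    \Omega(X,Y) = X(\omega(Y)) - Y(\omega(X)) - \omega([X,Y]) = -\omega([X,Y]).
\]
Here $\Omega$ is, by definition, $d\omega$ restricted fiberwise to $\wedge^2 T\OO^\perp$, so the left-hand side is exactly $\Omega(X,Y)$ as claimed. The point of restricting $d\omega$ to $\wedge^2 T\OO^\perp$ is precisely that on this subspace the two derivative terms drop out and only the bracket term survives.

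From this identity both directions are immediate. If $\Omega \equiv 0$, then $\omega([X,Y]) = -\Omega(X,Y) = 0$ for all sections $X,Y$ of $T\OO^\perp$, so $[X,Y]$ is a section of $T\OO^\perp$ and the distribution is involutive, hence integrable by Frobenius. Conversely, if $T\OO^\perp$ is integrable then $[X,Y]$ is a section of $T\OO^\perp$ for all such $X,Y$, so $\omega([X,Y]) = 0$ and therefore $\Omega(X,Y) = 0$; since every tangent vector in $T_x\OO^\perp$ is the value at $x$ of some local section of $T\OO^\perp$, this forces $\Omega_x = 0$ for all $x$, i.e.\ $\Omega \equiv 0$.

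There is no real obstacle here; the only point requiring a word of care is the pointwise-versus-sections issue in the converse direction, namely that $\Omega_x$ is determined by its values on pairs of local sections because $T\OO^\perp$, being a smooth (indeed analytic) subbundle, admits local frames, so every pair of vectors in $T_x\OO^\perp$ extends to a pair of local sections. Everything else is the tautological unwinding of the definitions of $\omega$ and $\Omega$ together with the Frobenius theorem.
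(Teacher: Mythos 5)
Your proposal is correct and follows exactly the paper's argument: the paper derives the lemma from the displayed identity $\Omega(X,Y) = -\omega([X,Y])$ for sections $X,Y$ of $T\OO^\perp$, combined with the Frobenius criterion, just as you do. Your extra remark on extending vectors to local sections is a harmless elaboration of a point the paper leaves implicit.
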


By Lemma~\ref{lem:so(E)}, we obtain from the map $\Omega_x : \wedge^2 T_x\OO^\perp \rightarrow \gex$ a corresponding map $\so(T_x\OO^\perp) \rightarrow \gex$ given by $\Omega_x \circ \varphi_x^{-1}$, where $\varphi_x : \wedge^2 T_x\OO^\perp \rightarrow \so(T_x\OO^\perp)$ is the isomorphism defined by Lemma~\ref{lem:so(E)}. This does not change the $\so(T_x\OO^\perp)$-module structure on the domain. Hence, we will denote with the same symbol $\Omega_x$ the linear map given by the $2$-form $\Omega$ when considered as a map $\so(T_x\OO^\perp) \rightarrow \g$.

It turns out that the forms $\omega_x$ and $\Omega_x$ satisfy special intertwining properties with respect to the module structure over $\GG(x) + \HH_0(x)$. These are stated and proved in Proposition~3.10 from \cite{OQ-U} for general non-compact simple Lie group actions. For our given setup, the following hold for every $x \in A \cap U$.
\begin{enumerate}[label=(\thesection.\arabic*)]
	\item\label{item_Omegax} The linear map $\Omega_x : \wedge^2 T_x \OO^\perp \rightarrow \gex$ intertwines the homomorphism of Lie algebras $\widehat{\rho}_x : \gex \rightarrow \GG(x)$ for the actions of $\gex$ on $\gex$ and of $\GG(x)$ on $T_x\OO^\perp$ via $\lambda_x^\perp$. 
    \item\label{item_KerOmegax} The linear map $\Omega_x : \so(T_x\OO^\perp) \rightarrow \gex$ is $\HH_0(x)$-invariant via $\lambda_x^\perp$. In particular, we have
    \[
	    [\lambda_x^\perp(\HH_0(x)), \so(T_x\OO^\perp)] \subset \ker(\Omega_x).
    \]
\end{enumerate}

Given the previous discussion there are two natural cases to consider: either $\Omega \equiv 0$, and $T\OO^\perp$ is integrable, or for some $x \in A\cap U$ the linear map $\Omega_x$ is non-zero, and the above properties for $\Omega_x$ impose strong restrictions on the centralizer $\HH$. The following result provides the description of $\HH$ in the latter case.

\begin{proposition}
    \label{prop:HH_structure}
    For a $\Gex$-action on $M$ as above, for $A$ and $U$ as in Propositions~\ref{prop:g(x)} and \ref{prop:GromovOpenDense}, respectively, let $x \in A \cap U$ be such that $\Omega_x \not= 0$. If $\VV(x)$ is a $\GG(x)$-submodule of $\HH$ given as in Proposition~\ref{prop:HHwithVV(x)}, then $\VV(x) \simeq \R^{3,4}$ as $\gex$-modules and $\dim(M) \leq 21$. Furthermore, $\HH_0(x) = 0$ and $\HH \simeq \so(3,4)$ both as Lie algebras and as modules over $\GG(x) \simeq \gex$.
\end{proposition}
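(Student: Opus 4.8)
The plan is to read off the conclusions from the $\gex$-module and the Lie-algebra structure of $\HH$, using the representation theory of Section~\ref{sec:G22} together with the intertwining properties \ref{item_Omegax}--\ref{item_KerOmegax}. First I would identify $\VV(x)$: by \ref{item_Omegax} the map $\Omega_x:\wedge^2 T_x\OO^\perp\to\gex$ is a homomorphism of $\gex$-modules (via $\lambda_x^\perp\circ\widehat{\rho}_x$ on the source and the adjoint action on the target), hence surjective since $\Omega_x\neq 0$ and $\gex$ is irreducible as a $\gex$-module. From $T_x\widetilde{M}=T_x\OO\oplus T_x\OO^\perp$ with $\dim T_x\OO=14$ we get $\dim T_x\OO^\perp=\dim M-14\le 7$. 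By complete reducibility and Proposition~\ref{prop:g22_and_R34}, a $\gex$-module of dimension at most $7$ is trivial or isomorphic to $\R^{3,4}$; the trivial case is impossible, as then $\wedge^2 T_x\OO^\perp$ would be trivial and could not surject onto $\gex$. Hence $T_x\OO^\perp\simeq\R^{3,4}$, so $\dim M=21$, and since $ev_x$ restricts to an isomorphism of $\GG(x)$-modules $\VV(x)\to T_x\OO^\perp$ (Proposition~\ref{prop:HHwithVV(x)}), also $\VV(x)\simeq\R^{3,4}$.

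Next I would rule out $\HH_0(x)\neq 0$. By Lemma~\ref{lem:so(E)} and Proposition~\ref{prop:wedge2R34_so(R34)_over_g22}, $\so(T_x\OO^\perp)\simeq\R^{3,4}\oplus\gex$ as $\gex$-modules with non-isomorphic irreducible summands, so its only $\gex$-submodules are $0$, a copy $V$ of $\R^{3,4}$, a copy of $\gex$ and the whole space; since $\Omega_x$ is onto with $7$-dimensional kernel, $\ker\Omega_x=V$. If $\HH_0(x)\neq 0$, the injectivity of $\lambda_x^\perp$ on it makes $\lambda_x^\perp(\HH_0(x))$ a nonzero $\gex$-submodule of $\so(T_x\OO^\perp)$, hence $V$, $\gex$ or the whole space, and in each case $[\lambda_x^\perp(\HH_0(x)),\so(T_x\OO^\perp)]\not\subseteq V$ (using $[V,V]=\so(3,4)$ from Proposition~\ref{prop:wedge2R34_so(R34)_over_g22}, and that $\gex$ and $\so(3,4)$ are perfect), contradicting \ref{item_KerOmegax}. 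So $\HH_0(x)=0$, whence $\HH=\GG(x)\oplus\VV(x)\simeq\gex\oplus\R^{3,4}\simeq\so(3,4)$ as modules over $\GG(x)\simeq\gex$.

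It then remains to upgrade this to a Lie-algebra isomorphism. Here $\GG(x)$ is a subalgebra with $[\GG(x),\VV(x)]\subseteq\VV(x)$, and $[\VV(x),\VV(x)]$ is a $\gex$-submodule of $\HH$ (Remark~\ref{rmk:brackets_and_modules}), so it is $0$, $\GG(x)$, $\VV(x)$ or $\HH$. The cases $[\VV(x),\VV(x)]\subseteq\VV(x)$ are excluded because $\Omega_x\neq 0$: the form $\Omega_x$ computes, up to a correction term governed by $\lambda_x$ as in \cite{OQ-U}, the $\GG(x)$-component of the bracket on $\wedge^2\VV(x)$, so $\Omega_x\neq 0$ forces $\GG(x)\subseteq[\VV(x),\VV(x)]$; equivalently, if $[\VV(x),\VV(x)]\subseteq\VV(x)$ then at a point of the dense orbit the Killing fields of $\VV(x)$ are global sections of $T\OO^\perp$ spanning an integrable distribution, making $\Omega$ vanish, against Lemma~\ref{lem:TOperp-Omega}. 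The case $[\VV(x),\VV(x)]=\GG(x)$ would make $(\HH,\GG(x))$ a symmetric pair with simple isotropy algebra $\gex$ and $7$-dimensional irreducible isotropy module, excluded by the same inspection of Table~II of \cite{Berger} used in Proposition~\ref{prop:wedge2R34_so(R34)_over_g22}. Hence $[\VV(x),\VV(x)]=\HH$, so $\HH$ is perfect; its radical is a proper $\mathrm{ad}(\GG(x))$-invariant ideal, hence a proper $\gex$-submodule of $\HH$, and it can be neither $\GG(x)$ (not solvable) nor $\VV(x)$ (since $[\VV(x),\VV(x)]=\HH\not\subseteq\VV(x)$), so it is $0$ and $\HH$ is semisimple of dimension $21$, containing $\gex$ with $\gex$-module complement $\simeq\R^{3,4}$. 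Complexifying and comparing with the semisimple complex Lie algebras of dimension $21$, using that $\g_2^{\C}$ has no nontrivial representation of dimension below $7$ (Proposition~\ref{prop:g22_and_R34}), forces $\HH^{\C}\simeq\so(7,\C)$; and since $\HH$ contains the split form $\gex$ and, by restricting $h$ at $x$, carries a $\gex$-invariant scalar product of signature $(3,4)$ on $\VV(x)\simeq\R^{3,4}$ (Proposition~\ref{prop:R34_scalar_product}), the real form must be $\so(3,4)$.

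The step I expect to be the main obstacle is this last one, and within it the use of $\Omega_x\neq 0$ to guarantee $\GG(x)\subseteq[\VV(x),\VV(x)]$: this needs either the precise relation between $\Omega_x$ and the bracket on $\wedge^2\VV(x)$ from \cite{OQ-U}, or the density argument, which in turn requires moving $x$ into the dense orbit while staying in the conull sets of Propositions~\ref{prop:g(x)} and~\ref{prop:GromovOpenDense} and in $\{\Omega\neq 0\}$, so that the Killing fields of $\VV(x)$ become everywhere tangent to $T\OO^\perp$; identifying the correct non-compact real form of $\so(7,\C)$ at the very end is where Proposition~\ref{prop:R34_scalar_product} is indispensable.
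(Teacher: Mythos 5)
Your identification of $\VV(x)\simeq\R^{3,4}$ and your elimination of $\HH_0(x)$ are correct and run parallel to the paper: property~\ref{item_Omegax} makes $\Omega_x$ a surjective homomorphism of $\gex$-modules, Proposition~\ref{prop:g22_and_R34} forces $T_x\OO^\perp\simeq\R^{3,4}$, Proposition~\ref{prop:wedge2R34_so(R34)_over_g22} identifies $\ker(\Omega_x)$ with the complement $V$ satisfying $[V,V]=\so(T_x\OO^\perp)$, and property~\ref{item_KerOmegax} then kills $\lambda_x^\perp(\HH_0(x))$. Your endgame (perfectness, the radical being a proper $\GG(x)$-submodule, $\HH^\C\simeq\so(7,\C)$, and the signature count via Propositions~\ref{prop:g22_and_R34} and~\ref{prop:R34_scalar_product}) also matches the paper, which organizes it slightly differently by first taking a Levi decomposition with $\GG(x)\subset\LL$, so that the only case to exclude is $\rad(\HH)=\VV(x)$.

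The genuine gap is exactly the step you flag, the exclusion of $[\VV(x),\VV(x)]\subseteq\VV(x)$. Your first route --- that $\Omega_x$ computes ``up to a correction term'' the $\GG(x)$-component of the bracket on $\wedge^2\VV(x)$ --- is not established anywhere in the paper and is not automatic: for $V,W\in\VV(x)$ one has $\Omega_x(V_x,W_x)=V_x(\omega(W))-W_x(\omega(V))-\omega_x([V,W]_x)$, and the two derivative terms need not vanish since $V,W$ are sections of $T\OO^\perp$ only at the single point $x$; so $[\VV(x),\VV(x)]\subseteq\VV(x)$ does not by itself give $\Omega_x=0$. Your second route asserts that the Killing fields in $\VV(x)$ are \emph{global} sections of $T\OO^\perp$, which is false as stated, and ``moving $x$ into the dense orbit'' does not repair it. The paper's actual argument is geometric: $\VV(x)$ becomes an ideal, $\HH\simeq\gex\ltimes\VV(x)$ integrates (by completeness and Lemma~1.11 of \cite{OQ-SO}) to a right $\Gex\ltimes R$-action; the orbit map $f$ at $x$ satisfies $df_{(e,e)}(\VV(x))=T_x\OO^\perp$, and since the right $R$-action is isometric and commutes with the left $\Gex$-action it preserves $T\OO^\perp$, so equivariance propagates the equality $T_{f(e,r)}N=T_{f(e,r)}\OO^\perp$ along $N=f(\{e\}\times R)$; $\Gex$-equivariance then produces integral manifolds of $T\OO^\perp$ through every point near $x$, and analyticity together with Lemma~\ref{lem:TOperp-Omega} forces $\Omega\equiv 0$, a contradiction. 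This construction is the missing content. A smaller ordering issue: invoking Table~II of \cite{Berger} to exclude $[\VV(x),\VV(x)]=\GG(x)$ presupposes that $\HH$ is simple (that is what Berger classifies), so in that branch you must first run the radical/simplicity argument --- which does go through there, since $\VV(x)$ fails to be an ideal --- before citing the table.
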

\begin{proof}
	For our given $x$, property~\ref{item_Omegax} implies that $T_x\OO^\perp$ is a non-trivial $\gex$-module. Hence, Proposition~\ref{prop:g22_and_R34} shows that $\VV(x) \simeq T_x\OO^\perp \simeq \R^{3,4}$ as $\gex$-modules. Furthermore, by Proposition~\ref{prop:R34_scalar_product} this isomorphism is an isometry up to a constant. In particular, the representation $\lambda_x^\perp : \GG(x) \rightarrow \so(T_x\OO^\perp)$ discussed above is naturally equivalent to the linear realization $\gex \rightarrow \so(3,4)$.
	
	Hence, Proposition~\ref{prop:wedge2R34_so(R34)_over_g22} yields the existence of a $\GG(x)$-module $V$ of $\so(T_x\OO^\perp)$ such that
    \begin{align*}
        \so(T_x\OO^\perp) &= \lambda_x^\perp(\GG(x)) \oplus V, \\
            [V,V] &= \so(T_x\OO^\perp),
    \end{align*}
    for the brackets of $\so(T_x \OO^\perp)$. Thus, the map $\Omega_x : \so(T_x \OO^\perp) \rightarrow \gex$ is naturally identified with the projection onto the summand $\lambda_x^\perp(\GG(x))$, which implies that $\ker(\Omega_x) = V$. If we apply property \ref{item_KerOmegax} and the fact that $[V,V] = \so(T_x\OO^\perp)$, we conclude that $\lambda_x^\perp(\HH_0(x)) = 0$. By the previous remarks in this section this yields $\HH_0(x) = 0$. 
    
    Hence, we have $\HH = \GG(x) \oplus \VV(x)$ where $\VV(x)$ is a $\GG(x)$-submodule isomorphic through $ev_x$ to $T_x \OO^\perp \simeq \R^{3,4}$. 
    
    If we consider a Levi decomposition $\HH = \LL \oplus \rad(\HH)$ such that $\GG(x) \subset \LL$, then such sum is a decomposition into $\GG(x)$-submodules as well. In particular, either $\rad(\HH) = 0$ or $\rad(\HH) = \VV(x)$. In the latter case, we obtain a semi-direct product $\HH = \GG(x) \ltimes \VV(x)$.

    Suppose that $\rad(\HH) = \VV(x)$ and choose $R$ a simply connected Lie group whose Lie algebra is $\VV(x)$. Hence, the Lie group $\Gex \ltimes R$, with the semi-direct product structure, has Lie algebra $\HH$. Let $\psi : \gex \ltimes \VV(x) \rightarrow \HH$ be the isomorphism whose restriction to $\gex$ is $\widehat{\rho}_x$ and that maps $\VV(x)$ to itself by the identity. By Lemma~1.11 from \cite{OQ-SO} (or by the results from \cite{ONeill}), the completeness of $\widetilde{M}$ and the fact that $\HH \subset \Kill(\widetilde{M})$ imply the existence of a right $\Gex \ltimes R$-action on $\widetilde{M}$ such that
    \[
        \psi(X) = X^*,
    \]
    for every $X \in \gex \ltimes \VV(x)$, where $X^*$ denotes the Killing field generated by the (right) action of the $1$-parameter subgroup $(\exp(tX))_t$ of $\Gex \ltimes R$. Consider the analytic map
    \begin{align*}
        f : \Gex \ltimes R &\rightarrow \widetilde{M} \\
            f(g,r) &= x(g,r),
    \end{align*}
    given by the $\Gex \ltimes R$-orbit at $x$ and which is clearly $\Gex \ltimes R$-equivariant. A straightforward computation (compare with the proof of Proposition~4.4 from \cite{OQ-U}) shows that $df_{(e,e)}$ is an isomorphism that maps
    \[
        df_{(e,e)}(\gex) = T_x \OO, \quad df_{(e,e)}(\VV(x)) = T_x \OO^\perp.
    \]
    In particular, $f$ is a local diffeomorphism from a neighborhood of the identity onto a neighborhood of $x$. If we choose $N = f(\{e\} \times R)$, then $N$ is a submanifold of $\widetilde{M}$ in a neighborhood of $x$ such that
    \[
        T_x N = T_x \OO^\perp.
    \]
    Furthermore, the equivariance of $f$ is easily seen to imply that
    \[
        T_{f(e,r)} N = T_{f(e,r)} \OO^\perp,
    \]
    for every $r$ in a neighborhood of $e$ in $R$. In other words, $N$ is an integral submanifold of $T\OO^\perp$ passing through $x$. Next, the equivariance with respect to $\Gex$ implies that there is an integral submanifold of $T\OO^\perp$ passing through every point in a neighborhood of $x$. By the analyticity of $T\OO^\perp$, we conclude that this vector bundle is integrable. This yields a contradiction since we assumed that $\Omega \not= 0$.

    From the previous discussion we conclude that $\rad(\HH) = 0$ and so that $\HH$ is semisimple. We observe that every decomposition of $\HH$ into simple ideals is also a decomposition into $\GG(x)$-submodules. Since $\HH$ is the sum of the two inequivalent irreducible $\GG(x)$-submodules $\GG(x)$ and $\VV(x)$, if $\HH$ is not simple, then both submodules are ideals. But this is impossible because $[\GG(x), \VV(x)] \not= 0$. We conclude that $\HH$ is a simple Lie algebra of dimension $21$. In particular, $\HH$ is a noncompact real form of the $21$-dimensional simple complex Lie algebra $\HH^\C$. An inspection of the list of simple complex Lie algebras (see \cite{Helgason}) shows that the only possibilities are either $\HH^\C \simeq \so(7,\C)$ or $\HH^\C \simeq \spi(6,\C)$. The latter and the fact that $\gex \simeq \GG(x) \subset \HH$ would imply the existence of a non-trivial $6$-dimensional representation of $\g_2^\C$, which is absurd. We conclude that $\HH^\C \simeq \so(7,\C)$, and so that $\HH \simeq \so(p,q)$ for some $p,q \geq 1$ such that $p + q = 7$. Considering again the inclusion $\gex \simeq \GG(x) \subset \HH$ we obtain a non-trivial representation $\gex \rightarrow \so(p,q)$. Then, Propositions~\ref{prop:g22_and_R34} and \ref{prop:R34_scalar_product} imply that we must have $\{p,q\} = \{3,4\}$ and so we can in fact assume that $p = 3, q = 4$. In other words, we conclude that $\HH \simeq \so(3,4)$. The arguments also show that the inclusion $\GG(x) \subset \HH$ must correspond to the linear realization $\gex \rightarrow \so(3,4)$ and so the isomorphism $\HH \simeq \so(3,4)$ holds in the sense of $\GG(x)$-modules as well.
\end{proof}

\section{Proof of the main results.}
\label{sec:main_proofs}
In what follows we will assume that the hypotheses of Theorem~\ref{thm:diff_type} hold. We will consider two cases according to whether $T\OO^\perp$ is integrable or not. In the first case, the results from \cite{QuirogaCh} imply that the first conclusion from both Theorems~\ref{thm:diff_type} and \ref{thm:metric_type} hold. So we can assume that the conclusions from Proposition~\ref{prop:HH_structure} hold at some point $x_0$.

Hence, the isomorphism $\widehat{\rho}_{x_0} : \gex \rightarrow \GG(x_0)$ can be extended to an isomorphism
\[
    \psi : \so(3,4) \rightarrow \HH.
\]
As in the proof of Proposition~\ref{prop:HH_structure} and by the geodesic completeness of $\widetilde{M}$, we can apply Lemma~1.11 from \cite{OQ-SO} or the results from \cite{ONeill} to obtain an isometric right $\wtSO_0(3,4)$-action on $\widetilde{M}$ such that
\[
    \psi(X) = X^*,
\]
for every $X \in \so(3,4)$. Recall that $X^*$ is the Killing field obtained from the (right) action of the $1$-parameter subgroup $(\exp(tX))_t$ of $\wtSO_0(3,4)$.

Let us denote by
\begin{align*}
    \varphi : \wtSO_0(3,4) &\rightarrow \widetilde{M} \\
        g &\mapsto x_0 g,
\end{align*}
the $\wtSO_0(3,4)$-orbit map at $x_0$. From the previous remarks it follows that
\[
    d\varphi_e(X) = ev_{x_0}(\psi(X))
\]
for every $X \in \wtSO_0(3,4)$, and so defines an isomorphism. Since $\varphi$ is $\wtSO_0(3,4)$-equivariant, we conclude that $\varphi$ is a local diffeomorphism.

Let us denote with $K$ the Killing form of $\so(3,4)$ and let $h_K$ be the bi-invariant pseudo-Riemannian metric on $\wtSO_0(3,4)$ induced by $K$. It is well known that $\wtSO_0(3,4)$ is complete with the pseudo-Riemannian metric $h_K$.

Let $V$ be the $\gex$-submodule of $\so(3,4)$ complementary to $\gex$, as given by Proposition~\ref{prop:wedge2R34_so(R34)_over_g22}. We have proved that $d\varphi_e = ev_{x_0} \circ \psi$ and so it defines an isomorphism of modules from $\so(3,4)$ onto $T_{x_0} \widetilde{M}$ for the module structures over $\gex$ and $\GG(x_0)$, respectively, and with respect to the isomorphism $\widehat{\rho}_{x_0} : \gex \rightarrow \GG(x_0)$. Furthermore, we also have
\[
	d\varphi_e(\gex) = T_{x_0} \OO, \quad d\varphi_e(V) = T_{x_0} \OO^\perp.
\]
On the other hand, the restrictions of the metric $h_{x_0}$ to both $T_{x_0}\OO$ and $T_{x_0}\OO^\perp$ are non-degenerate and $\GG(x_0)$-invariant. It follows that the bilinear forms
\[
	\varphi_e^*(h|_{T_{x_0}\OO}), \quad \varphi_e^*(h|_{T_{x_0}\OO^\perp})
\]
on $\gex$ and $V$, respectively, are non-degenerate and $\gex$-invariant. By Proposition~\ref{prop:R34_scalar_product} there exists non-zero constants $c_1, c_2$ such that
\[
	K|_{\gex} = c_1 \varphi_e^*(h|_{T_{x_0}\OO}), \quad K|_V = c_2 \varphi_e^*(h|_{T_{x_0}\OO^\perp}).
\]
If we consider the pseudo-Riemannian metric on $M$ given by
\[
	\overline{h} = c_1 h|_{T\OO} \oplus c_2 h|_{T\OO^\perp},
\]
then the above discussion shows that the map
\[
	d\varphi_e : (\so(3,4), K) \rightarrow (T_{x_0} \widetilde{M}, \overline{h}_{x_0})
\]
is an isometry. Furthermore, the equivariance of $\varphi$ implies that it defines a local isometry $(\wtSO(3,4), h_K) \rightarrow (\widetilde{M}, \overline{h})$. The completeness of $h_K$ and the results from \cite{ONeill} prove that $\varphi$ is in fact an isometry.

Let us consider the (left) $\Gex$-action on $\widetilde{M}$ lifted from the corresponding action on $M$. This yields from the isometry $\varphi$ a homomorphism
\[
    \rho : \Gex \rightarrow \Iso_0(\wtSO_0(3,4), h_K).
\]
The latter group of isometries is given by $L(\wtSO_0(3,4)) R(\wtSO_0(3,4))$, the group of left and right translations of $\wtSO_0(3,4)$, and so we obtain a pair of homomorphisms
\[
    \rho_1, \rho_2 : \Gex \rightarrow \wtSO(3,4),
\]
such that
\[
    \rho(g) = L_{\rho_1(g)} R_{\rho_2(g)^{-1}},
\]
for every $g \in \Gex$. We note that this $\Gex$-action commutes with the right $\wtSO_0(3,4)$-action on $\widetilde{M}$ and so both actions commute when acting on $\wtSO_0(3,4)$. This implies that
\[
    \rho_2(\Gex) \subset Z(\wtSO_0(3,4)),
\]
thus showing that $\rho_2 = e$. In particular, we have $\rho = L_{\rho_1}$, i.e.~the $\Gex$-action defined by $\rho$ is given by left translations by $\rho_1$. Hence, $\varphi$ is $\Gex$-equivariant for the $\Gex$-action on the domain given by the non-trivial homomorphism $\rho_1 : \Gex \rightarrow \wtSO_0(3,4)$ and left translations. 

Let us identify $\widetilde{M}$ with $\wtSO_0(3,4)$ through the isometry $\varphi$. By the previous discussion we have
\[
    \pi_1(M) \subset \Iso(\wtSO_0(3,4), h_K).
\]
Since $\Iso_0(\wtSO_0(3,4))$ has finite index in $\Iso(\wtSO_0(3,4))$ (see for example \cite{QuirogaAnnals}) we conclude that the discrete subgroup
\[
    \Gamma_1 = \pi_1(M) \cap \Iso_0(\wtSO_0(3,4), h_K) = \pi_1(M) \cap L(\wtSO_0(3,4)) R(\wtSO_0(3,4))
\]
is a finite index subgroup of $\pi_1(M)$. Every element $\gamma \in \Gamma_1$ corresponds to an isometry
\[
    \gamma = L_{g_1} R_{g_2},
\]
where $g_1, g_2 \in \wtSO_0(3,4)$. Since the $\Gamma_1$-action and the lifted $\Gex$-action on $\widetilde{M}$ commute with each other, it follows that $g_1 \in Z = Z_{\wtSO_0(3,4)}(\rho_1(\Gex))$, the centralizer in $\wtSO_0(3,4)$ of the image of $\rho_1 : \Gex \rightarrow \wtSO_0(3,4)$. Hence, we conclude that
\[
    \Gamma_1 \subset L(Z) R(\wtSO_0(3,4)).
\]
We now prove the following.

\begin{lemma}
    \label{lem:Z_G22_SO(3,4)}
    For every non-trivial homomorphism $\rho_1 : \Gex \rightarrow \wtSO_0(3,4)$, the centralizer $Z = Z_{\wtSO_0(3,4)}(\rho_1(\Gex))$ of the image of $\rho_1$ in $\wtSO_0(3,4)$ is a finite subgroup.
\end{lemma}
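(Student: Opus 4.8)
The plan is to exploit that a non-trivial $\rho_1$ realizes $\Gex$, up to a finite kernel, as a subgroup of $\SO_0(3,4)$ acting irreducibly on $\R^{3,4}$; this forces $Z$, after projecting down to $\SO_0(3,4)$, into the group of scalar transformations, which turns out to be trivial. The finiteness of $Z$ itself then follows from the finiteness of the kernel of the universal covering $p : \wtSO_0(3,4) \to \SO_0(3,4)$.

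First I would record that, since $\Gex$ is connected and $\gex$ is simple, a non-trivial $\rho_1$ has non-zero differential $d\rho_1 : \gex \to \so(3,4)$, which is moreover injective because its kernel is an ideal of $\gex$. Composing $\rho_1$ with $p$ and with the linear realization $\SO_0(3,4) \hookrightarrow \mathrm{GL}(\R^{3,4})$ gives a non-trivial $7$-dimensional representation of $\Gex$, whose differential $\pi : \gex \to \End(\R^{3,4})$ is non-zero. By Proposition~\ref{prop:g22_and_R34}, any non-trivial $\gex$-module of dimension $7$ must be irreducible and isomorphic to $\R^{3,4}$: there is no room for a trivial summand, since every other non-trivial irreducible $\gex$-module has dimension at least $14$. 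Hence, by Proposition~\ref{prop:R34_scalar_product}, the endomorphism algebra satisfies $\End_{\gex}(\R^{3,4}) = \R\,\mathrm{id}$.

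Next, writing $H$ for the image of $\Gex$ under this representation, I would observe that $H$ is connected with Lie algebra $\pi(\gex)$, so an element of $\End(\R^{3,4})$ commutes with $H$ if and only if it commutes with $\pi(\gex)$; therefore the commutant of $H$ in $\End(\R^{3,4})$ equals $\End_{\gex}(\R^{3,4}) = \R\,\mathrm{id}$, and $Z_{\mathrm{GL}(\R^{3,4})}(H) = \R^\times\,\mathrm{id}$. A scalar $\lambda\,\mathrm{id}$ preserves a form of signature $(3,4)$ only if $\lambda^2 = 1$, and has determinant $\lambda^7$, so it belongs to $\SO_0(3,4)$ only for $\lambda = 1$; thus $Z_{\SO_0(3,4)}(H) = \{\mathrm{id}\}$. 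Finally, for $z \in Z$ its image $p(z)$ centralizes $p(\rho_1(\Gex)) = H$, hence $p(z) = \mathrm{id}$, i.e.\ $Z \subseteq \ker p$. Since $p$ is the universal covering and $\pi_1(\SO_0(3,4)) \cong \pi_1(\SO(3)) \times \pi_1(\SO(4))$ is finite (here both $3,4 \geq 3$), the kernel $\ker p$ is finite, and therefore so is $Z$.

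The step requiring the most care is the passage from \emph{discrete} to \emph{finite}. That $Z$ is discrete is immediate, since its Lie algebra is the centralizer of $d\rho_1(\gex)$ in $\so(3,4)$, which vanishes because $\so(3,4) \simeq \gex \oplus \R^{3,4}$ has no trivial $\gex$-submodule. But a discrete subgroup of a Lie group need not be finite, so the argument genuinely needs the global inputs that $Z_{\SO_0(3,4)}(H)$ is trivial — obtained from the irreducibility of the linear realization together with Schur's lemma — and that the covering $\wtSO_0(3,4) \to \SO_0(3,4)$ has finite kernel.
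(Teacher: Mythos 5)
Your proof is correct, and its key step is genuinely different from the one in the paper. The paper first shows that the Lie algebra $\z$ of $Z$ vanishes by observing that $d\rho_1(\gex)$ is a maximal subalgebra of $\so(3,4)$ (a consequence of the decomposition $\so(3,4)\simeq\gex\oplus\R^{3,4}$), and then handles the passage from discrete to finite by citing Lemma~1.1.3.7 of Warner, which places $Z$ inside a maximal compact subgroup of $\wtSO_0(3,4)$. You bypass that piece of general structure theory: using the standard $7$-dimensional representation and Schur's lemma in the form of Proposition~\ref{prop:R34_scalar_product}, you show that the centralizer of the image of $\Gex$ already in $\SO_0(3,4)$ is trivial (the only candidates are scalars, and no nontrivial scalar lies in $\SO_0(3,4)$ in odd dimension), so that $Z$ is forced into the kernel of the covering $p:\wtSO_0(3,4)\rightarrow\SO_0(3,4)$, which is finite because $\pi_1(\SO_0(3,4))\cong\mathbb{Z}/2\times\mathbb{Z}/2$. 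This route is more elementary and self-contained --- it avoids the external citation entirely --- and it gives the sharper conclusion that $Z$ has order at most $4$; what it relies on, and the paper's argument does not, is the availability of the faithful low-dimensional matrix model of $\SO_0(3,4)$ on which $\gex$ acts irreducibly. Your closing remark correctly isolates the discrete-to-finite passage as the point where a global input is indispensable, and your global input (triviality of the downstairs centralizer plus finiteness of $\ker p$) is a legitimate substitute for the paper's (containment in a maximal compact subgroup).
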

\begin{proof}
    Consider the corresponding non-trivial homomorphism $d\rho_1 : \gex \rightarrow \so(3,4)$, and let $V$ a $\gex$-submodule of $\so(3,4)$ complementary to $\gex$. By Proposition~\ref{prop:g22_and_R34} we have $V \simeq \R^{3,4}$ as $\gex$-modules. It follows that $d\rho_1(\gex)$ is a maximal subalgebra of $\so(3,4)$. Since $d\rho_1(\gex) + \z$ is a Lie subalgebra of $\so(3,4)$, where $\z$ is the Lie algebra of $Z$, we conclude that $\z = 0$. Hence, $Z$ is a discrete subgroup. Finally, Lemma~1.1.3.7 from \cite{Warner} implies that $Z$ is contained in any maximal compact subgroup of $\wtSO_0(3,4)$. Hence, $Z$ is a finite subgroup.
\end{proof}

By Lemma~\ref{lem:Z_G22_SO(3,4)} we conclude that
\[
    \Gamma = \Gamma_1 \cap R(\wtSO_0(3,4)),
\]
is a finite index subgroup of $\Gamma_1$ and so of $\pi_1(M)$. The group $\Gamma$ is clearly identified with a discrete subgroup of $\wtSO_0(3,4)$ such that 
\[
    \pi : \widehat{M} = \wtSO_0(3,4)/\Gamma \rightarrow \widetilde{M}/\pi_1(M) = M
\]
defines a finite cover of $M$.

On the other hand, a proof similar to that of Lemma~3.4 from \cite{OQ-SO} shows that $M$ has finite volume on the metric $\overline{h}$. Hence, $\Gamma$ is a lattice of $\wtSO_0(3,4)$. This proves that the cases (2) of Theorems~\ref{thm:diff_type} and \ref{thm:metric_type} are satisfied, thus completing the proof of these theorems.

\end{document}